\def\@seccntformat#1{\hspace*{0mm}%
 \protect\textup{\protect\@secnumfont
   \ifnum\pdfstrcmp{subsection}{#1}=0 \bfseries\fi
   \csname the#1\endcsname
   \protect\@secnumpunct
     }%
}
\newtheorem{definition}{\sbweight Definition}
\newtheorem{lemma}{Lemma}
\newtheorem{theorem}{\sbweight Theorem}
\theoremstyle{remark}
\newtheorem{remark}{\sc  Remark\rm}[section]
\selectfont\symbol{62}\fontencoding{\encodingdefault}}
\newcommand{\assign}{:=}
\newcommand{\nobracket}{}
\newcommand{\of}{:}
\newcommand{\tmabbr}[1]{#1}
\newcommand{\tmem}[1]{{\em #1\/}}
\newcommand{\tmmathbf}[1]{\ensuremath{\boldsymbol{#1}}}
\newcommand{\tmname}[1]{\textsc{#1}}
\newcommand{\tmop}[1]{{\operatorname{#1}}}
\newcommand{\tmstrong}[1]{\textbf{#1}}
\newenvironment{enumerateroman}{\begin{enumerate}[i.] }{\end{enumerate}}
\newcommand{\acro}[1]{#1}
\newcommand{\jname}[1]{{\tmname{#1}}}
\newcommand{\sdllg}{{SDLLG}}
\newcommand{\sllg}{{SLLG}}
\newcommand{\dt}[1]{\partial_t #1}
\newcommand{\heff}{\tmmathbf{h}_{\tmop{eff}}}
\newcommand{\hd}{\tmmathbf{h}_{\tmop{d}}}
\newcommand{\phian}{\phi_{\tmop{an}}}
\newcommand{\cex}{c_{\tmop{ex}}}
\newcommand{\Omp}{\Omega'}
\newcommand{\gr}{\gamma_0}
\newcommand{\Dzero}{D_0}
\newcommand{\pmone}{\tmmathbf{\pi}_{\m_1} [\tmmathbf{v}]}
\newcommand{\Kc}{\mathcal{K}}
\newcommand{\divv}{\mathrm{div}\,}
\newcommand{\divb}{\mathbf{div}\,}
\newcommand{\Fsp}{\mathbf{{\of}}}
\newcommand{\lapl}{\Delta}
\newcommand{\Ti}{T}
\newcommand{\Om}{\Omega}
\newcommand{\T}{\top}
\newcommand{\m}{\boldsymbol{m}}
\newcommand{\s}{\boldsymbol{s}}
\newcommand{\eqs}{=}
\newcommand{\RR}{\mathbb{R}}
\newcommand{\Stwo}{\mathbb{S}}
\newcommand{\grad}{\nabla}
\DeclareMathOperator*{\essinf}{ess\,inf}
\newcounter{myhypo}
\renewcommand\themyhypo{(H\arabic{myhypo})}
\newtcolorbox{hypo}[1][]{
  breakable,
  enhanced,
  top=0pt,
  bottom=0pt,
  nobeforeafter,
  colback=white,
  boxrule=0pt,
  arc=0pt,
  right=30pt,
  left=20pt,
  outer arc=0pt,
  overlay={
    \node[inner sep=0pt,anchor=east] 
    at (frame.east) 
    {\refstepcounter{myhypo}\themyhypo\label{#1}};
  },
}
\newenvironment{hypotheses}
  {\list{}{\setlength\leftmargin{0pt}\item\relax}}
  {\endlist}
\newcommand\Hypo[2][]{%
  \begin{hypo}[#1]#2\end{hypo}}
\begin{document}

\title[Spin-diffusion model for micromagnetics in the limit of
long times]{Spin-diffusion model for micromagnetics \protect\\ in the limit of
long times}

\author{Giovanni Di Fratta}
\address{Institute for Analysis and Scientific Computing, TU Wien, Wiedner
Hauptstrae 8-10, 1040 Wien, Austria}
\email{giovanni.difratta@asc.tuwien.ac.at}

\author{Ansgar J{\"u}ngel}
\address{Institute for Analysis and Scientific Computing, TU Wien, Wiedner
Hauptstrae 8-10, 1040 Wien, Austria}
\email{ansgar.juengel@tuwien.ac.at}

\author{Dirk Praetorius}
\address{Institute for Analysis and Scientific Computing, TU Wien, Wiedner
Hauptstrae 8-10, 1040 Wien, Austria}
\email{dirk.praetorius@asc.tuwien.ac.at}

\author{Valeriy Slastikov}
\address{School of Mathematics, University of Bristol, Bristol BS8 1TW, United
Kingdom}
\email{valeriy.slastikov@bristol.ac.uk}

\date{\today}

\begin{abstract}\sloppy
  In this paper, we consider spin-diffusion Landau--Lifshitz--Gilbert equations 
	(SDLLG), which consist of the time-dependent
  Landau--Lifshitz--Gilbert ({\acro{LLG}}) equation coupled with a time-dependent diffusion equation for the electron spin accumulation. 
  The model takes into account the diffusion process of
  the spin accumulation in the magnetization dynamics of ferromagnetic multilayers. We prove that in the
  limit of long times, the system reduces to simpler equations in which the
  {\acro{LLG}} equation is coupled to a nonlinear and nonlocal steady-state
  equation, referred to as {\sllg}. As a by-product, the existence of global weak solutions to the {\sllg} equation is obtained.
  Moreover, we prove  weak-strong uniqueness of solutions of {\sllg}, i.e., all weak solutions coincide with the
  (unique) strong solution as long as the latter exists in time. The results
  provide a solid mathematical ground to the qualitative behavior originally
  predicted by {\jname{Zhang}}, {\jname{Levy}}, and {\jname{Fert}} in
  {\cite{zhang2002mechanisms}} in ferromagnetic multilayers.
  
  \medskip
  \noindent \emph{Keywords}: Micromagnetics, Landau--Lifshitz--Gilbert equation,
    spin diffusion, asymptotic analysis, existence of solutions,
		weak-strong uniqueness.

\smallskip
\noindent $2010$ \emph{Mathematics Subject Classification.} 35C20, 35D30, 35G20, 35G25 49S05.
\end{abstract}

{\maketitle}

\section{Introduction and Physical Motivations}

In the last decades, the study of ferromagnetic materials and their magnetization processes has
been the focus of considerable research for its
application to magnetic recording technology. Below the Curie temperature, ferromagnetic media possess a spontaneous magnetization state (which
is the result of a spontaneous alignment of the elementary magnetic moments in
the medium) that can be controlled through appropriate external magnetic
fields. The magnetic recording technology exploits the magnetization of
ferromagnetic media to store information.

The {\tmem{giant magnetoresistance}} effect ({\acro{GMR}}), for which {\jname{Fert}}
and {\jname{Gr{\"u}nberg}} have been awarded the Nobel prize in
2007, has introduced new solutions in the
design of magnetic random access memories ({\acro{MRAM}}s). In a typical
{\acro{MRAM}} device, the binary information is stored in elementary cells
that can be addressed via two perpendicular arrays of parallel conducting
lines, called word lines and bit lines. A schematic of
the {\acro{MRAM}} architecture is depicted in Figure~\ref{fig:mram}. 

The {\acro{GMR}} allows for a {\tmem{giant}} change in the resistance of a
conductor in response to an applied magnetic field, and it is the primary
mechanism behind the reading process in {\acro{MRAM}}s. Furthermore, the switching
(writing) process of an {\acro{MRAM}} cell can be achieved by magnetic field
pulses produced by the sum of horizontal and vertical currents. The magnetic
pulse induces a magnetic torque, whose strength depends on the angle between
the field and the magnetization, which permits the switching of the cell.

This behavior is conceptually simple, but it is tough to realize in practice on the nanoscale.
 One of the fundamental issues
connected with the downscaling of magnetic storage devices is the thermal
stability of magnetization states. In principle, the problem can be
circumvented by increasing the magnetic anisotropy of the material, but as a
consequence, higher magnetic fields are required to reverse the magnetization
states. However, these magnetic fields typically act on a
long-range, whereas it is desired that the field produced by the two currents
can switch only the target cell. For this reason, considerable attention has recently
been paid to design new strategies of magnetization switching in
which the applied field is assisted by additional external actions. Examples
of these new approaches are heat and microwave-assisted switchings
{\cite{KochA2000,daquino2011current}}.

\begin{figure}[tb]
\includegraphics[width=13.8178374655647cm,height=5.001443001443cm]{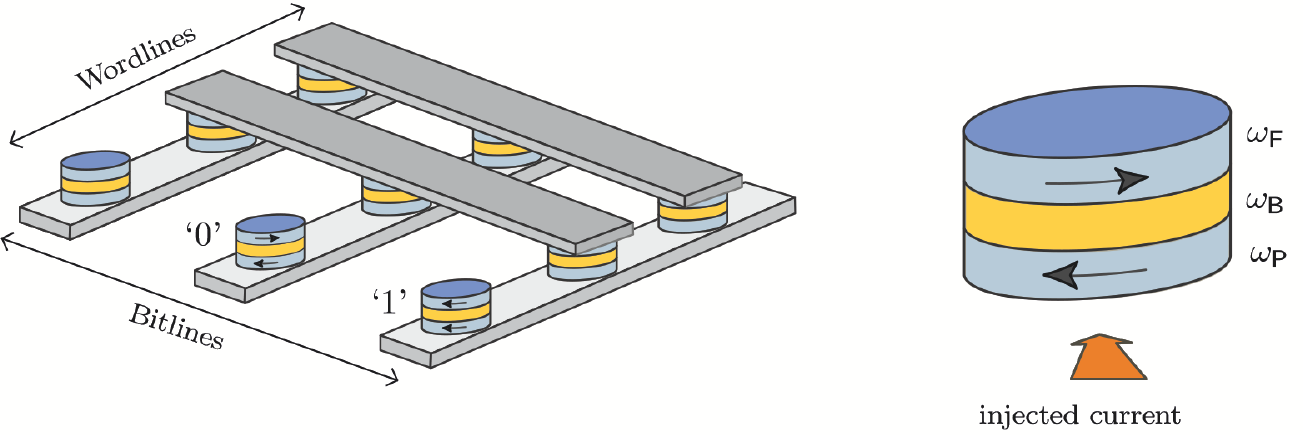}
  \caption{ \footnotesize \label{fig:mram}
  (Left) In a typical {\acro{MRAM}} device, the
  binary information is stored in elementary cells that can be addressed via
  two perpendicular arrays of parallel conducting lines, called word lines and
  bit lines. (Right) A schematic picture of an {\acro{MRAM}} cell. In its
simplest form, an {\acro{MRAM}} cell consists of two ferromagnetic layers $\omega_{\mathsf{F}}$ (the
{\tmem{free layer}}) and
$\omega_{\mathsf{P}}$ (the {\tmem{pinned layer}}), separated by a thin insulator
$\omega_{\mathsf{B}}$ (the \emph{tunnel barrier}). Such a multilayer structure is compatible with the {\acro{GMR}} effect. The logical states `$0$' and
  `$1$' are coded into the direction of the magnetization in the free layer
  $\omega_{\mathsf{F}}$. Note that \acro{MRAM} cells usually have an ellipsoidal cross-section. This is quite common in applications because ellipsoidal particles support single-domain magnetization states \cite{alouges2015liouville, BrownA1968, di2012generalization, DiFratta20160197} which allow for a qualitative analysis of the system dynamics in the framework of ODEs.
  }
\end{figure}

Fairly recently,
{\jname{Slonczewski}}~{\cite{slonczewski1996current}}
and {\jname{Berger}}~{\cite{berger1996emission}} proposed a
novel model for magnetization reversal based on the use of spin-polarized
currents injected directly in the magnetic free layer. The new mechanism
proved extremely valuable to overcome the difficulties imposed by the use of
strong magnetic fields. Since its introduction, it has been the object of
much research work in the spintronics community 
as a candidate to assist the switching of the magnetization in
{\acro{MRAM}}s cells. In this new approach, each {\acro{MRAM}} cell hosts a
multilayer structure, called {\tmem{magnetic tunnel junction}}, which in its
simplest form consists of two ferromagnetic layers $\omega_{\mathsf{F}}$ (the
so-called {\tmem{free layer}}, where the magnetization can change freely) and
$\omega_{\mathsf{P}}$ (the {\tmem{pinned layer}}, where the magnetization is
pinned by exchange interactions), separated by a thin insulator
$\omega_{\mathsf{B}}$ (the \emph{tunnel barrier}). A current is injected
perpendicular to the multilayer. The electron spin is polarized in the pinned
layer $\omega_{\mathsf{P}}$. When the electrons reach the free layer
$\omega_{\mathsf{F}}$, the spin exerts an additional torque on the underlying
magnetization, which assists the switching process.

The model proposed by
{\jname{Slonczewski}}~{\cite{slonczewski1996current}}
does not take into account the effects of spin diffusion, which have been
found to be important in understanding magnetoresistance in magnetic
multilayers. This motivated the work of
{\jname{Zhang}}, {\jname{Levy}}, and {\jname{Fert}}
{\cite{zhang2002mechanisms}}~(see also {\cite{shpiro2003self}}), where a new
spin-transfer model for the relaxation of the coupled system
spin-magnetization is proposed. Their model includes spatial variations in
both spin and magnetization, but is derived under the assumption that the
magnetization is uniform in each of the layers and, therefore, essentially
one-dimensional. Later, {\jname{Garc{\i}a-Cervera}} and {\jname{Wang}}
({\tmabbr{cf.}}~{\cite{garcia2007spin}}, see also~\cite{Abert2014})
generalized the model to the three-dimensional setting.

\subsection{The spin-diffusion Landau--Lifshitz--Gilbert equation}
In this section, we introduce the model proposed in
{\cite{garcia2007spin}}. For that, we set $\Om \assign
\omega_{\mathsf{F}} \cup \omega_{\mathsf{P}} \subset \RR^3$, and denote by $\Om' \assign \Om
\cup \omega_{\mathsf{B}}\subset \RR^3$, the region occupied by the multilayer. The spin
accumulation $\mathbf{S}$ is defined on $\Om'$, and
the magnetization $\mathbf{M}$ is supported on the region $\Om$ occupied by
the two magnetic layers. The magnetization is zero in $\omega_{\mathsf{B}}$
(i.e., outside the magnetic samples). We assume that the temperature is constant and well below the Curie temperature so that the magnetization $\mathbf{M}$ is of constant modulus in $\Om$, i.e., $|\mathbf{M}|=M_s$ with $M_s\in\RR_+$ being the saturation magnetization in $\Om$. To shorten notation, we set $\s=\mathbf{S}/{M_s}$ and $\m:=\mathbf{M}/M_s$. 

The spin-diffusion Landau--Lifshitz--Gilbert equation ({\sdllg}) consists of a
quasilinear diffusion equation for the evolution of the spin accumulation
coupled to the well-established equation for magnetization dynamics. In strong
form, {\sdllg} reads as
({\tmabbr{cf.}}~{\cite{Abert2014,garcia2007spin2,GarciaCervera2006}})
\begin{align}
  \dt{\s} & \eqs \divb \left[ J \left( \grad \s,
  \m \chi_{\Om} \right) \right] - \frac{2
  \Dzero}{\lambda^2_{\tmop{sf}}} \tmmathbf{s}- \frac{2 \Dzero}{\lambda^2_J}
  \s \times \m \chi_{\Om} \quad \text{in } \Omp \times \RR_+ , 
  \label{eq:spindifproc}\\
  \dt{\m} & \eqs - \gr \m \times \left( \heff
  [\m] + j_0 \s+\tmmathbf{f} \right) + \alpha \m
  \times \dt{\m} \quad \text{in } \Om \times \RR_+ .  \label{eq:LLGphysics}
\end{align}
Here, $\s: \Om' \times \RR_+ \rightarrow \RR^3$ is the spin
accumulation, $\m: \Om \times \RR_+ \rightarrow \Stwo^2$ is the
magnetization, 
and $\m \chi_{\Om}$ denotes the extension of
$\m$ by zero to the whole $\RR^3$. From now on, to simplify the notation, we
will identify $\m$ with its extension $\m \chi_{\Om}$ as
long as no ambiguities can arise. The first equation {\eqref{eq:spindifproc}}
models the spin-polarized transport in the multilayer $\Omp$ as a diffusive
process~{\cite{garcia2007spin,shpiro2003self,zhang2002mechanisms}}. The second
equation {\eqref{eq:LLGphysics}} is a variant of the Landau--Lifshitz--Gilbert
({\acro{LLG}}) equation {\cite{LandauA1935,gilbert2004phenomenological}} and
describes the relaxation process of the magnetization. Since the
modulus of the magnetization is  preserved by {\acro{LLG}}, we have
normalized the magnetization to take values on the 2-sphere $\Stwo^2$. The
system {\eqref{eq:spindifproc}}--{\eqref{eq:LLGphysics}} is supplemented with
the initial/boundary conditions
\begin{equation*}
  \left\{ \begin{array}{lll}
    \m (x, 0) \eqs \m^{\ast} (x) & \text{in} & \Om,\\
    \partial_{\tmmathbf{n}} \m \eqs 0 & \text{on} & \partial \Om \times  \RR_+,
  \end{array} \right. \qquad \left\{ \begin{array}{lll}
    \s (x, 0) \eqs \s^{\ast} (x) & \text{in} & \Omp,\\
    \partial_{\tmmathbf{n}} \s \eqs 0 & \text{on} & \partial \Omp \times  \RR_+,
  \end{array} \right. 
\end{equation*}
for some given initial states $\m^{\ast} : \Om \rightarrow \Stwo^2$,
$\s^{\ast} : \Om' \rightarrow \RR^3$. A detailed description of the
terms involved in {\sdllg} follows.

\subsubsection*{The spin-diffusion equation}In the spin-diffusion equation
{\eqref{eq:spindifproc}}, $J \left( \grad \s, \m \right)$
is the spin current, $\Dzero\in L^\infty(\Om)$ is the diffusion coefficient,
$\lambda_{\tmop{sf}}\in\RR_+$ is the characteristic length for spin-flip relaxation,
and $\lambda_J\in\RR_+$ is related to the electron's mean free path. The spin current
is given by
$$
  J \left( \grad \s, \m \right) \assign 2 \Dzero
  \left[ \grad \s- \beta \beta' (\nabla \s \cdot
  \m_{}) \otimes \m \right] - \frac{\beta
  \mu_B}{e} \tmmathbf{j}_e \otimes \m, 
$$
where $\tmmathbf{j}_e$ is the applied electric current, $0 < \beta, \beta' <
1$ are the dimensionless spin-polarization parameters of the magnetic layers,
$\mu_B>0$ is the Bohr magneton, and $e>0$ is the charge of the electron. With given $\gamma\in\RR_+$, the
diffusion coefficient $\Dzero\in L^\infty(\Om)$ satisfies $\Dzero(x)\geqslant \gamma$ for almost all $x\in\Om$. The last term
in {\eqref{eq:spindifproc}} represents the interaction between the spin
accumulation and the background magnetization, and it is responsible for the
transfer of angular momentum between them.

\subsubsection*{The Landau--Lifshitz--Gilbert equation}In
{\eqref{eq:LLGphysics}}, $\gr\in\RR_+$ is the gyromagnetic ratio, $j_0$ models the
strength of the interaction between the spin and the magnetization, and
$\alpha$ is the dimensionless damping parameter. The first term on the
right-hand side of {\eqref{eq:LLGphysics}} describes a precession around the
field $\heff [\m] + j_0 \s$+$\tmmathbf{f}$, whereas the
(phenomenological) second term accounts for dissipation in the system. The time-dependent vector
field $\tmmathbf{f}$ is the so-called applied field, and it is assumed to be
unaffected by variations of $\m$. The effective field $\heff$
includes contributions originating from different spatial scales and is defined
by the negative first-order variation of the micromagnetic energy
functional $\mathcal{G}_{\Om}$, i.e.\
\begin{equation}
  \heff [\m] \assign - \partial_{\m} \mathcal{G}_{\Om}.
  \label{eq:heff}
\end{equation}
For single-crystal ferromagnets, the
micromagnetic energy functional reads as
({\tmabbr{cf.}}~{\cite{BrownB1963,hubert2008magnetic}})
\begin{equation}
  \mathcal{G}_{\Om} (\m) \assign \underset{= : \mathcal{E}_{\Om}
  (\m)}{\frac{c_{\tmop{ex}}}{2} \int_{\Om \; \;} \left| \grad
  \m \right|^2 } + \underset{= : \mathcal{A}_{\Om}
  (\m)}{\kappa \int_{\Om  \;} \phian (\m) }  \underset{= :
  \mathcal{W}_{\Om} (\m)}{- \frac{\mu_0}{2} \int_{\Om \; \;} \hd
  \left[ \m \chi_{\Om} \right] \cdot \m} 
  \label{eq:GLunorm}
\end{equation}
with $\m \in H^1 ( \Om, \Stwo^2 )$, and recall that $\m
\chi_{\Om}$ is the extension by zero of $\m$ to $\RR^3$. Combining
{\eqref{eq:heff}} and {\eqref{eq:GLunorm}}, we obtain the following expression
for the effective field:
\begin{equation}
  \heff [\m] = c_{\tmop{ex}} \Delta \m- \kappa \grad \phian
  (\m) + \mu_0 \hd \left[ \m \chi_{\Om} \right] 
  \label{eq:exprefffield} .
\end{equation}
In {\eqref{eq:GLunorm}}, the {\tmem{exchange energy}} $\mathcal{E}_{\Omega}$
penalizes nonuniformities in the orientation of the magnetization,  and $c_{\tmop{ex}} > 0$
is the exchange stiffness constant. The {\tmem{magnetocrystalline anisotropy
energy}} $\mathcal{A}_{\Omega}$ accounts for the existence of preferred
directions of the magnetization: its energy density $\phian : \Stwo^2
\rightarrow \RR^+$ vanishes only on a finite set of directions (the so-called
{\tmem{easy directions}}); $\kappa$ is the anisotropic constant. The third
term $\mathcal{W}_{\Omega}$ is the {\tmem{magnetostatic self-energy}}, i.e.,
the energy due to the demagnetizing field $\hd$ generated by $\m$.
The operator $\hd : \m \mapsto \hd [\m]$ provides, for every
$\m \in L^2 ( \RR^3, \RR^3 )$, the unique solution in
$L^2 ( \RR^3, \RR^3 )$ of the Maxwell--Amp{\`e}re equations of
magnetostatics
{\cite{BrownB1962,DiFratta2019,praetorius2004analysis}}:
\begin{equation}
  \left\{\begin{array}{l}
    \divv \tmmathbf{b} [\m] \eqs 0,\\
    \mathbf{curl}\,  \hd [\m] \eqs 0,\\
    \tmmathbf{b} [\m] = \mu_0 \left( \hd [\m]
    +\m \right),
  \end{array} \right. \label{eq:FarMaxdemag}
\end{equation}
where $\tmmathbf{b} [\m]$ denotes the magnetic flux density, and
$\mu_0$ is the magnetic permeability of vacuum.

\begin{remark}
  In what follows
  we assume that $\Om = \Omp$ (see, e.g., {\cite{MR2411058}} and
  {\cite{MR4021901}}). This permits
to simplify the notation and to state the weak-strong uniqueness result in a more elegant form. Everything we are
  going to prove still holds in the case $\Om \subset \Omp$. However, slight modifications may be required depending on the degree of
  smoothness one agrees to impose to a weak solution in order to elevate it to the rank of a
  strong solution ({\tmabbr{cf.}}~Lemma \ref{lemma:regularitysm0} and Remark
  \ref{rmk:onOmdiffOmp}).
\end{remark}

\subsection{Contributions of the present work: {\sdllg} in the limit of long
times}Already {\jname{Zhang}}, {\jname{Levy}}, and
{\jname{Fert}}~{\cite{zhang2002mechanisms}} observed that in the
non-ballistic regime, the time scales involved in {\sdllg}
{\eqref{eq:spindifproc}}--{\eqref{eq:LLGphysics}} are very different. For the
spin accumulation $\s$ the characteristic time scales are of the order of
$\lambda^2_{\tmop{sf}} / \left( 2 \Dzero \right)$ and
$\lambda^2_J / \left( 2 \Dzero \right)$. Given the typical spatial scales involved in spintronic
applications, these quantities are
of the order of picoseconds.
On the other hand, the characteristic time scale
for the magnetization dynamics depends on the inverse of the modulus of the precessional term $\gr \m_{\varepsilon} \times
( \heff [\m_{\varepsilon}] + j_0
\s_{\varepsilon} +\tmmathbf{f})$. In typical spintronic
applications, this time scale is of the order of nanoseconds. Therefore,
{\guillemotleft}{\tmem{as long as one is interested in the magnetization
process of the local moments}}, {\tmem{one can always treat the spin
accumulation in the limit of long
times}}{\guillemotright}~{\cite{zhang2002mechanisms}}. 

Formally, the observations in {\cite{zhang2002mechanisms}} can be
summarized in the claim that as long as the main interest is in magnetization
dynamics, one can forget about the term $\dt{\s}$ in \eqref{eq:spindifproc} and focus on the analysis of the {\sllg} equation
$$
  \dt{\m} \eqs - \gr \m \times \left( \heff
  [\m] + j_0 \s+\tmmathbf{f} \right) + \alpha \m
  \times \dt{\m} \quad \text{in } \Om \times \RR_+,
$$
with the spin accumulation $\s$ satisfying the {\tmem{steady-state}} equation
\begin{equation*}
  \divb \left[ J \left( \grad \s,
  \m \chi_{\Om} \right) \right] - \frac{2
  \Dzero}{\lambda^2_{\tmop{sf}}} \tmmathbf{s}- \frac{2 \Dzero}{\lambda^2_J}
  \s \times \m \chi_{\Om} \eqs \tmmathbf{0} \quad \text{in } \Om \times \RR_+ , 
\end{equation*}
One aim of this
paper is to turn this observation into a quantitative statement.

For that, one has to rescale the original domain $\Omega$ by a scaling factor $\ell$ to be of size one (without relabelling) and treat $\Dzero / \lambda^2_{\tmop{sf}}$, $\Dzero / \lambda^2_J$, $\mu_B\tmmathbf{j}_e/e$, and $\Dzero/\ell^2$  as quantities of the same order. Moreover, we also assume that $j_0$, $\tmmathbf{f}$, $c_{\tmop{ex}}/\ell^2$ are of the same order and $\gamma_0 c_{\tmop{ex}}/\ell^2 \ll \Dzero/\ell^2$.
We introduce the small parameter $\varepsilon = \gamma_0 c_{\tmop{ex}}/\Dzero$ and note that in typical spintronic applications $\varepsilon \ll 1$ is of the order $10^{-2}\ldots 10^{-4}$.

After rescaling the time of the system by the order of $\gamma_0 c_{\tmop{ex}}/\ell^2$ (see \cite[Section~2.2]{Abert2014} for a detailed analysis), one can rewrite the {\sdllg} equation \eqref{eq:spindifproc}--\eqref{eq:LLGphysics} as
\begin{eqnarray}
  \varepsilon \dt{\s_{\varepsilon}} & \eqs & \divb \left[
  \mathcal{J} \left( \grad \s_{\varepsilon},
  \m_{\varepsilon} \right) \right] - \gamma_1
  \s_{\varepsilon} - \gamma_2 \s_{\varepsilon} \times
  \m_{\varepsilon} \quad \text{in } \Om \times \RR_+, 
  \label{eq:systemLLg+spin2}\\
  \dt{\m_{\varepsilon}} & \eqs & -\m_{\varepsilon} \times
  \left( \heff [\m_{\varepsilon}] + j_0 \s_{\varepsilon}
  +\tmmathbf{f}_{\varepsilon} \right) + \alpha \m_{\varepsilon}
  \times \dt{\m_{\varepsilon}} \quad \text{in } \Om \times \RR_+, 
  \label{eq:systemLLg+spin2.2}
\end{eqnarray}
with the spin current given by
\begin{equation} 
  \mathcal{J} \left( \grad \s, \m \right) \assign
  \Dzero  \left[ \grad \s- \beta \beta' (\nabla \s \cdot
  \m_{}) \otimes \m \right] -
  \frac{\beta}{2} \tmmathbf{j}_e \otimes \m.  \label{eq:exprJval}
\end{equation}
Here, $\gamma_1$ and $\gamma_2$ are positive quantities of order one, while $\varepsilon \ll 1$ is a small parameter.
In \eqref{eq:systemLLg+spin2} and \eqref{eq:systemLLg+spin2.2}, to avoid introducing new notation, we denoted by the very same symbols the rescaled version of the physical quantities appearing in the {\sllg} equation  \eqref{eq:spindifproc}--\eqref{eq:LLGphysics}.

For the diffusion coefficient $\Dzero\in L^\infty(\Om)$ we assume the existence of a positive constant $\gamma\in\RR_+$ such that $\Dzero(x)\geqslant \gamma$ for almost all $x\in\Om$. However, without loss of generality in our arguments, we assume that $\gamma_1$, $\gamma_2$ are positive constants.   

We note that, formally, if $\s_\varepsilon\to \s$, $\m_\varepsilon\to \m$, and $\varepsilon \dt{\s_{\varepsilon}} \to 0$ then we recover the {\sllg} equation predicted  
in {\cite{zhang2002mechanisms}}, i.e., 
\begin{equation}
  \dt{\m \eqs  \m \times \left( \heff [\m] + j_0
  \s+\tmmathbf{f} \right) + \alpha \m \times
  \dt{\m} \quad \text{in } \Om \times \RR_+,} \label{eq:sLLg+spin1.1}
\end{equation}
with the spin accumulation $\s$ satisfying the {\tmem{steady-state}} equation
\begin{equation}
  \divb \left[ \mathcal{J} \left( \grad \s, \m \right)
  \right] - \gamma_1 \s- \gamma_2 \s \times \m
  \eqs \tmmathbf{0} \quad \text{in } \Om \times \RR_+ , \label{eq:sLLg+spin1.2}
\end{equation}
and the spin current $\mathcal{J}$ given by \eqref{eq:exprJval}.

The main aim of the paper is to show that this is indeed the case, i.e., that in the limit of long times, the 
rescaled {\sdllg}
\eqref{eq:systemLLg+spin2}--\eqref{eq:systemLLg+spin2.2} reduces to the simpler {\sllg} \eqref{eq:sLLg+spin1.1}--\eqref{eq:sLLg+spin1.2}, in which  {\acro{LLG}}  is coupled to the nonlinear (but
still nonlocal) steady-state equation. The result provides a solid
mathematical ground to the qualitative analysis of {\jname{Zhang}},
{\jname{Levy}}, and {\jname{Fert}} {\cite{zhang2002mechanisms}} in the context
of magnetic multilayers. Besides, the argument shows the existence (but possibly {\tmem{nonuniqueness}}) of global weak solutions of {\sllg}.

We complete the paper by proving the weak-strong uniqueness of the solutions of
the reduced {\sdllg}, i.e., weak solutions coincide with the (unique) strong solution as long as the latter exists in time.
Weak-strong uniqueness
results are of particular relevance in the numerical integration of
{\acro{LLG}} systems. Indeed, available unconditionally convergent integrators
assure that subsequences of the computed discrete solutions  converge weakly
towards a weak solution of {\acro{LLG}}. Weak-strong
uniqueness results guarantee that all these numerical schemes will converge
towards the same limit (even for the full sequence of computed solutions), at
least as long as a strong solution exists. We refer to
{\cite{MR2257110,MR2379897}} for some seminal works on the numerical analysis of
plain {\acro{LLG}} and to {\cite{Abert2014,dilinear}}
for the analysis of some coupled {\acro{LLG}} systems.  In \cite{abert2015three,ruggeri2016coupling}, the observations in {\cite{zhang2002mechanisms}} are empirically validated through a comparative numerical analysis of {\sdllg} and {\sllg} models: It is underlined how {\sllg} can be more effective in describing magnetization dynamics, since it leads to the same experimental results as for  {\sdllg}, but allows for much larger time steps of the numerical integrator.

\subsection{State of the art}Many research works contributed to the study of
solutions of {\acro{LLG}}. Existence and nonuniqueness of global weak
solutions to {\acro{LLG}} {\eqref{eq:LLGphysics}} date back to
{\cite{MR1167422}}. In {\cite{MR3999335}} (see also
{\cite{zamponi2016analysis}}), the Maxwell--Landau--Lifshitz equation coupled
with spin accumulation is considered, and a suitable regularization procedure
is used to obtain the existence of global weak solutions. For the system we
are interested in, the existence (and nonuniqueness) of global weak solutions
is proved in {\cite{garcia2007spin}} (see also {\cite{Abert2014}}).
{\sdllg} {\eqref{eq:systemLLg+spin2}}--{\eqref{eq:systemLLg+spin2.2}} is a
rescaled version of the three-dimensional model introduced in
{\cite{garcia2007spin}}.

The uniqueness of weak solutions depends on the regularity class they belong
to. Indeed, in the class of smooth functions, there exists at most one
solution of {\acro{LLG}} (see, e.g., {\cite{Carbou2001}}). Therefore, a
natural question is whether existing smooth and weak solutions coincide,
rather than coexist, i.e., whether a weak-strong uniqueness result holds for
{\acro{LLG}}. Such a question is ubiquitous in the analysis of {\acro{PDE}}s
since the positive answer given by {\jname{Leray}} for the Navier--Stokes
equations {\cite{MR3838050}}. For {\acro{LLG}}, weak-strong uniqueness has
only been investigated recently. In {\cite{dumas2014weak}}, weak-strong
uniqueness is proved in the simplified setting where $\Om = \RR^3$ (i.e., possible
boundary conditions are neglected) and $\heff$ consists only of the
leading-order exchange contribution. Despite the simplified setting, the
proof already involves much tedious algebra. In {\cite{di2020weak}}, weak-strong
uniqueness for the solutions of {\acro{LLG}} is obtained for the full relevant
$3 d$ setting. In particular, the analysis accounts also for the
Dzyaloshinskii--Moriya interaction, which is the primary mechanism behind the
emergence of magnetic skyrmions, as well as for the demagnetizing field $\hd$ from \eqref{eq:FarMaxdemag}. However, no coupling of {\acro{LLG}} with
other nonlinear {\acro{PDE}} systems is taken into account. In section
\ref{sec:wekastrong}, we show how the approximation argument in
{\cite{di2020weak}} can be used to derive weak-strong uniqueness of solutions
of the reduced {\sllg}.

From the above discussion, there emerges the need for regularity results for
{\acro{LLG}}. In that regard, we recall that {\acro{LLG}} is intimately
related to the harmonic map heat flow from $\Om$ into $\Stwo^2$ and 
stationary harmonic maps, for which one cannot expect to have general
regularity results in dimensions higher than two {\cite{Riviere1995}}. For
sufficiently small initial data, there exists a (unique) strong solution which
is global in time {\cite{Carbou2001,Feischl2017}}, whereas for general initial
data, even in $2 d$, solutions may develop finitely many point singularities
in finite time {\cite{Harpes2004}}. In {\cite{MR2411058}}, the authors prove the
existence of global smooth solutions of the spin-polarized transport equation
({\sdllg}) in $2 d$ for small initial data. 
For general dimensions, partial regularity
has been investigated in {\cite{Liu2004,Melcher2005,Wang2006}} for
{\acro{LLG}}, and, more recently, in {\cite{MR4021901}} for {\sdllg}.

\subsection{Outline}The paper is organized as follows. In
section~\ref{sec:mainresultsstated}, we state our main results:
Theorem~\ref{thm:Theorem1} concerns the analysis of {\sdllg} in the
limit of long times, whereas Theorem~\ref{thm:Theorem2} states the
weak-strong uniqueness result for the limiting equation. The proofs of
Theorems~\ref{thm:Theorem1} and \ref{thm:Theorem2} are given in 
sections~\ref{sec:proofthm1} and \ref{sec:wekastrong}, respectively.

\subsection{Notation} For $\Om \subset \RR^3$ open and bounded, we denote by
$H^1(\Om)^{\ast}$ the dual space of $H^1(\Om)$ and
by $\langle \cdot, \cdot \rangle_{\Om}$ the corresponding duality pairing,
understood in the sense of the Gelfand triple $H^1 ( \Om)
\subset L^2( \Om) \subset H^1 ( \Om )^{\ast}$. In
particular, if $u, v \in L^2 ( \Om )$, then $\langle u, v
\rangle_{\Om}$ denotes the usual inner product in $L^2(\Om)$ and
$\| \cdot \|_{\Om}$ is the induced $L^2 ( \Om )$ norm. Vector-valued functions are denoted in
boldface, but we do not embolden the function spaces they belong to; the
context will clarify what we mean. Instead, we use the symbol $H^1( \Om,
\Stwo^2)$ to denote the metric subspace of $H^1 ( \Om)$
consisting of $\Stwo^2$-valued functions. When dealing with time-dependent
vector fields $\tmmathbf{u}: \Om \times \RR_+ \rightarrow \RR^3$, we will
often use the symbol $\tmmathbf{u} (t)$ to denote the section $\tmmathbf{u}
(\cdot, t) : x \in \Om \mapsto \tmmathbf{u} (x, t) \in \RR^3$; again, the
context will clarify the meaning. Finally, for every $\Ti \in \RR_+$ we set
$\Om_{\Ti} \assign \Om \times \left( 0, \Ti \right)$.

\section{Statement of main results}\label{sec:mainresultsstated}

We recall the definition of a global weak solution of the {\sdllg} system
\eqref{eq:systemLLg+spin2}--\eqref{eq:systemLLg+spin2.2} as given in \cite{garcia2007spin, Abert2014}, where also existence results are shown. The definition naturally extends the
notion of global weak solution of {\acro{LLG}} introduced in {\cite{MR1167422}}. To simplify the notation, we neglect the contributions from
crystalline anisotropy and the external applied field $\tmmathbf{f}$. It is straightforward to include them in the analysis, and details are left to the reader.

\begin{definition}
   Let $\Om \subset \RR^3$ be a bounded domain, $\m^{\ast} \in H^1
  ( \Om, \Stwo^2 )$ and $\s^{\ast} \in H^1 ( \Om
  )$. For $\varepsilon > 0$, the pair
  $(\m_{\varepsilon}, \s_{\varepsilon}) \in L^{\infty}
  ( \RR_+, H^1 ( \Om, \Stwo^2 ) ) \times L^{\infty}
  ( \RR_+, L^2 ( \Om ) )$ is called a global weak
  solution of the {\rm {\sdllg}} system
  {\eqref{eq:systemLLg+spin2}}--{\eqref{eq:systemLLg+spin2.2}} if for every
  $\Ti > 0$ the following properties {\rm (i)--(v)} are satisfied:
  \begin{itemize}
    \item[{\rm(i)}] $\m_{\varepsilon} \in H^1 ( \Om_{\Ti}, \Stwo^2
    )$ and $\m_{\varepsilon} (0) =\m^{\ast}$ in the
    sense of traces;
    
    \item[{\rm(ii)}] $\s_{\varepsilon} \in L^2 ( \RR_+, H^1 ( \Om
    ) ), \partial_t \s_{\varepsilon} \in L^2 (
    \RR_+, H^1 ( \Om )^{\ast} )$, and
    $\s_{\varepsilon} (0) =\s^{\ast}$ in the sense of
    traces;
    
    \item[{\rm(iii)}] for every $\tmmathbf{\varphi} \in H^1 ( \Om_{\Ti} )$,
    there holds
    \begin{eqnarray}
      \int_0^{\Ti} \langle \partial_t \m_{\varepsilon},
      \tmmathbf{\varphi} \rangle_{\Om} & \eqs & \int_0^{\Ti} \alpha
      \langle \partial_t \m_{\varepsilon}, \tmmathbf{\varphi} \times
      \m_{\varepsilon} \rangle_{\Om} + c_{\tmop{ex}} \left\langle \grad
      \m_{\varepsilon}, \grad (\tmmathbf{\varphi} \times
      \m_{\varepsilon}) \right\rangle_{\Om}  \nonumber\\
      &  & \hspace{2em} - \mu_0\int_0^{\Ti} \left\langle \hd
      [\m_{\varepsilon}], \tmmathbf{\varphi} \times
      \m_{\varepsilon} \right\rangle_{\Om} - \int_0^{\Ti} j_0
      \langle \s_{\varepsilon}, \tmmathbf{\varphi} \times
      \m_{\varepsilon} \rangle_{\Om};  \label{eq:defweaksolornew0}
    \end{eqnarray}
    \item[{\rm(iv)}] for all $\tmmathbf{\varphi} \in L^2( \RR_+, H^1 ( \Om
    ) )$, there holds
    \begin{eqnarray}
      \varepsilon \int_0^{\Ti} \langle \partial_t \s_{\varepsilon},
      \tmmathbf{\varphi} \rangle_{\Om} & \eqs & - \int_0^{\Ti}  \langle \mathcal{J}
      \left( \grad \s_{\varepsilon}, \m_{\varepsilon}
      \right) , \grad \tmmathbf{\varphi}\rangle_\Om - \gamma_1 \int_0^{\Ti}
      \langle \s_{\varepsilon} , \tmmathbf{\varphi} \rangle_\Om \nonumber\\
      &  &  \hspace{2em} - \gamma_2
     \int_0^{\Ti} \langle \s_{\varepsilon} \times
      \m_{\varepsilon} , \tmmathbf{\varphi} \rangle_\Om - \frac{\beta}{2}
      \int_0^{\Ti} \int_{\partial \Om} (\m_{\varepsilon} \cdot
      \tmmathbf{\varphi}) (\tmmathbf{j}_e \cdot \tmmathbf{n}),
      \label{eq:weakseps}
    \end{eqnarray}
    where $\tmmathbf{j}_{e} \in L^2 ( \RR_+, H^1 (
    \Om ) )$ is a prescribed current and $\mathcal{J}$ is given by
    {\eqref{eq:exprJval}};

    \item[{\rm(v)}] the following energy inequality holds:
    \begin{subequations}
    \begin{equation}
      \mathcal{F}_{\Om} \left( \m_{\varepsilon} \left( \Ti \right)
      \right) + \alpha \int_0^{\Ti} \left\| \dt{\m}_{\varepsilon}
      \right\|^2_{\Om} \; \leqslant \; \mathcal{F}_{\Om} (\m^{\ast}) +
      \int_0^{\Ti} \langle \partial_t \m_{\varepsilon}, j_0
      \s_{\varepsilon} \rangle_{\Om},  \label{eq:eninequ0}
    \end{equation}
    with
    \begin{equation*}
      \mathcal{F}_{\Om} \left( \m_{\varepsilon} \left( \Ti \right)
      \right) \; \eqs \; \frac{c_{\tmop{ex}}}{2} \int_{\Om \; \;} \left| \grad
      \m_{\varepsilon} \right|^2 - \frac{\mu_0}{2} \int_{\Om} \hd
      \left[ \m_{\varepsilon} \chi_{\Om} \right] \cdot
      \m_{\varepsilon} . 
    \end{equation*}
    \end{subequations}
  \end{itemize}
\end{definition}

\begin{remark}
 If $(\m_{\varepsilon}, \s_{\varepsilon})$ is a weak solution of {\sdllg}, then standard results guarantee that $\m_{\varepsilon} \in C(0, \Ti ; L^2(\Om))$ and $\s_\varepsilon \in C(0, \Ti ; L^2(\Om))$, cf., e.g., \cite[Section 5.9.2, Theorems 2 and 3]{evans2010partial}.
\end{remark}

\begin{remark}
  Since the seminal paper {\cite{MR1167422}}, the definition of global
  weak solutions of {\acro{LLG}} includes the energy requirement
  (v), which even holds with equality for strong solutions
  of {\acro{LLG}}. However, in the definition of weak solutions of {\sdllg} given in
  {\cite{garcia2007spin}}, the condition (v) has been omitted. It was later shown in {\cite[Thm.~24]{Abert2014}} that this requirement can be satisfied. 
\end{remark}

\begin{remark}
  Throughout, $\grad \s=(\partial_i s_j)_{i,j=1}^3$ stands for the matrix whose columns are the
  gradients of the components of $\s$, i.e., the transposed of the
  Jacobian matrix of $\s$. In our notation, the Jacobian matrix is
  thus denoted by $\grad^{\T} \s$. The same remark applies to $\grad
  \m$. The vector $\divb$ operator acts on the columns of $\grad
  \s$: If $A : \Om \rightarrow \RR^{3
  \times 3}$ is a matrix-valued function, then $\divb A
  \eqs \sum_{i = 1}^3 \divv (A e_i) e_i$, with $(e_i)_{i = 1}^3$ the
  canonical basis of $\RR^3$, is the vector having
  for components the scalar divergence of the columns of $A$.
  In particular, $\divb  \grad \s= \lapl
  \s$. Also, if $\tmmathbf{\varphi}: \Om \rightarrow \RR^3$ is a
  vector field, then 
  \begin{equation}
    \divb A \cdot \tmmathbf{\varphi} \eqs \sum_{i = 1}^3  \varphi_i  \divv (A e_i) \eqs \sum_{i = 1}^3 \left[ \divv (\varphi_i A_{} e_i) - \grad
    \varphi_i \cdot A_{} e_i \right] \eqs \divv (A\tmmathbf{\varphi}) - A \Fsp \grad
    \tmmathbf{\varphi}. \label{eq:intbyparts}
  \end{equation}
  According to the divergence theorem, this leads to
  \begin{equation}
    \int_{\Om} \divb A \cdot \tmmathbf{\varphi}  \eqs \int_{\partial
    \Om} A\tmmathbf{\varphi} \cdot \tmmathbf{n}  - \int_{\Om} A
    \Fsp \grad \tmmathbf{\varphi}. \label{eq:divthmvec}
  \end{equation}
  From the previous considerations, it is clear that {\eqref{eq:weakseps}} is
  the natural weak formulation of~{\eqref{eq:systemLLg+spin2}}. Indeed,
  applying {\eqref{eq:divthmvec}} and {\eqref{eq:intbyparts}} to
  {\eqref{eq:exprJval}}, we have
  \begin{equation*}
    \int_{\Om} \divb  [(\nabla \s \cdot \m_{}) \otimes
    \m] \cdot \tmmathbf{\varphi}   \eqs  - \int_{\Om}
    (\nabla \s \cdot \m_{}) \otimes \m \Fsp
    \grad \varphi + \int_{\partial \Om} ((\nabla \s \cdot
    \m) \otimes \m )\tmmathbf{\varphi} \cdot
    \tmmathbf{n} 
  \end{equation*}
  and the last integrand gives 
\begin{equation*}
 ((\nabla \s \cdot
    \m) \otimes \m )\tmmathbf{\varphi} \cdot
    \tmmathbf{n} \;\eqs \;(\m \cdot \tmmathbf{\varphi}) 
  (\nabla \s \cdot \m_{}) \cdot \tmmathbf{n} \;\eqs\; (\m
  \cdot \tmmathbf{\varphi})  (\partial_{\tmmathbf{n}} \s \cdot
  \m_{}) = 0\quad \text{on } \partial \Om \, .
\end{equation*}  
  Similarly, integration by parts of
  the term $\divb (\tmmathbf{j}_e \otimes \m) \cdot
  \tmmathbf{\varphi}$ gives the boundary term
  \begin{equation*}
  (\tmmathbf{j}_e \otimes
  \m) \tmmathbf{\varphi} \cdot \tmmathbf{n}= (\m \cdot
  \tmmathbf{\varphi}) (\tmmathbf{j}_e \cdot \tmmathbf{n}).
  \end{equation*}
\end{remark}

For future reference, we collect here the mathematical assumptions on the physical parameters of the {\rm \sdllg} system
{\tmem{{\eqref{eq:defweaksolornew0}}--{\eqref{eq:weakseps}}}} that will be assumed throughout the paper:

\begin{hypotheses}
\Hypo[hyp:parameters]{%
\emph{Assumptions on the physical parameters of the system.} In what follows, we assume that  $\alpha,\cex,j_0,\mu_0$ are positive constants (cf.~\eqref{eq:defweaksolornew0}).
Also, we assume that $\Dzero\in L^\infty(\Om)$ and that there exists a positive constant $\gamma\in\RR_+$ such that $\Dzero(x)\geqslant \gamma$ for almost all $x\in\Om$. The coefficients $\gamma_1$, $\gamma_2$ are assumed to be  positive constants (cf.~\eqref{eq:weakseps}). Finally, we assume that $0<\beta,\beta'<1$ (cf.~\eqref{eq:exprJval}).}
\end{hypotheses}

Our first contribution is the following result concerning the behavior of the
spin transport equation in the limit $\varepsilon \rightarrow 0$. For the sake of clarity, we will often write the equations in strong form, although their weak counterpart is meant.
\begin{theorem}
  \label{thm:Theorem1}
  Let $\Om \subset \RR^3$ be a bounded Lipschitz domain, and assume \tmem{\ref{hyp:parameters}}.
	For every $\varepsilon > 0$, let $(\m_{\varepsilon},
  \s_{\varepsilon}) \in L^{\infty} ( \RR_+, H^1 ( \Om,
  \Stwo^2 ) ) \times L^{\infty} ( \RR_+, L^2 ( \Om
  ) )$ be a weak solution of the {\rm \sdllg} system
  {\tmem{{\eqref{eq:defweaksolornew0}}--{\eqref{eq:weakseps}}}}. Then, there
  exists a vector field $\m_0 \in L^2 ( \RR_+ , H^1 ( \Om, \Stwo^2
  ) )$ such that
  \begin{eqnarray}
    \m_{\varepsilon} \rightharpoonup \m_0 &  &
    \text{{\tmem{weakly in}} } L^2_{\tmop{loc}} ( \RR_+, H^1 ( \Om,
    \Stwo^2 ) ) , \nonumber
  \end{eqnarray}
  and $\m_0\in H^1 ( \Om_\Ti,
    \Stwo^2 ) $ for every $\Ti>0$.
Moreover, the vector field $\m_0$ satisfies the {\rm \sllg} equation
  \begin{equation}
    \partial_t \m_0 \eqs -\m_0 \times
     (\heff [\m_0] + j_0 \mathcal{H}_s
    [\m_0] +\tmmathbf{f}) 
    + \alpha \m_0 \times \partial_t
    \m_0 \quad \text{{\tmem{in}} }  \Om \times \RR_+,
    \label{eq:CauchyequsValstationary}
  \end{equation}
  with
  \[ \left\{ \begin{array}{lll}
       \m_0 (0) \eqs \m^{\ast} (x) & \text{{\tmem{in}}} &
       \Om,\\
       \partial_{\tmmathbf{n}} \m_0 \eqs 0 & \text{{\tmem{on}}} &
       \partial \Om \times \RR_+ .
     \end{array} \right. \]
  Here, $\mathcal{H}_s : \m \in H^1 ( \Om, \Stwo^2 )
  \mapsto \mathcal{H}_s [\m] \in H^1 ( \Om, \RR^3 )$ denotes
  the nonlinear operator which maps every $\m \in H^1 ( \Om,
  \Stwo^2 )$ to the unique solution $\s \assign
  \mathcal{H}_s [\m] \in H^1 ( \Om, \RR^3 )$ of the
  stationary spin-diffusion equation
  \begin{equation}
    - \divb \left[ \mathcal{J} \left( \grad \s, \m \right)
    \right] + \gamma_1 \s+ \gamma_2 \s \times \m
    \eqs 0 \text{ \tmem{in }} \Om, \quad \text{\tmem{subject to }} \partial_{\tmmathbf{n}} \s \eqs
  0 \text{\tmem{ on }} \partial \Om. \label{eq:stspindiff}
  \end{equation}
\end{theorem}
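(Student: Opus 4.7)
My plan is to derive $\varepsilon$-uniform a priori bounds, extract a weakly convergent subsequence, and pass to the limit in the weak formulations (iii)--(iv) of the definition of weak solution. A preliminary step is the well-posedness of the stationary problem \eqref{eq:stspindiff} for given $\m\in H^1(\Om,\Stwo^2)$, which renders $\mathcal{H}_s$ well-defined. This follows from Lax--Milgram applied to the bilinear form
\[
a_{\m}(\s,\mathbf{v}) \assign \langle \mathcal{J}(\grad \s,\m),\grad \mathbf{v}\rangle_\Om + \gamma_1\langle\s,\mathbf{v}\rangle_\Om + \gamma_2\langle\s\times\m,\mathbf{v}\rangle_\Om
\]
on $H^1(\Om,\RR^3)$: the cross term vanishes on $\mathbf{v}=\s$, and the principal part is coercive since $\Dzero(|\grad \s|^2-\beta\beta'|\grad \s\cdot\m|^2)\geq \Dzero(1-\beta\beta')|\grad \s|^2$, thanks to $|\m|=1$ and \ref{hyp:parameters}.

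For the a priori bounds, I would test \eqref{eq:weakseps} against $\tmmathbf{\varphi}=\s_\varepsilon$. The same coercivity, Young's inequality on the $\tmmathbf{j}_e$ and boundary contributions, and the identity $\s_\varepsilon\times\m_\varepsilon\cdot\s_\varepsilon\equiv 0$, yield
\[
\tfrac{\varepsilon}{2}\|\s_\varepsilon(\Ti)\|^2_\Om + c\int_0^\Ti\|\grad \s_\varepsilon\|^2_\Om + \gamma_1\int_0^\Ti\|\s_\varepsilon\|^2_\Om \leq C(\Ti).
\]
The energy inequality \eqref{eq:eninequ0} is then invoked by absorbing $\int_0^\Ti\langle\partial_t\m_\varepsilon,j_0\s_\varepsilon\rangle_\Om$ into a fraction of $\alpha\|\partial_t\m_\varepsilon\|^2_\Om$ via Young, the residual $\|\s_\varepsilon\|^2_\Om$ being already controlled. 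Combined, these give $\varepsilon$-uniform bounds for $\m_\varepsilon$ in $L^\infty(\RR_+,H^1(\Om))$ with $\partial_t\m_\varepsilon\in L^2_{\tmop{loc}}(\RR_+,L^2(\Om))$, for $\s_\varepsilon$ in $L^2_{\tmop{loc}}(\RR_+,H^1(\Om))$, and for $\varepsilon\partial_t\s_\varepsilon$ in $L^2_{\tmop{loc}}(\RR_+,H^1(\Om)^\ast)$ directly from \eqref{eq:weakseps}. Aubin--Lions then produces $\m_\varepsilon\to\m_0$ strongly in $L^2(\Om_\Ti)$ and pointwise a.e.\ after extraction, so $|\m_0|=1$ a.e.\ and $\m_\varepsilon\otimes\m_\varepsilon\to\m_0\otimes\m_0$ strongly in every $L^p(\Om_\Ti)$, $p<\infty$, by dominated convergence; simultaneously, $\s_\varepsilon\rightharpoonup \s_0$ weakly in $L^2_{\tmop{loc}}(\RR_+,H^1(\Om))$.

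To pass to the limit in \eqref{eq:weakseps}, note that the left-hand side vanishes because $\varepsilon\partial_t\s_\varepsilon\to 0$ in $L^2_{\tmop{loc}}(\RR_+,H^1(\Om)^\ast)$, the linear terms pass by weak convergence, and the bilinear products $\langle\s_\varepsilon\times\m_\varepsilon,\tmmathbf{\varphi}\rangle_\Om$ and $\langle(\grad \s_\varepsilon\cdot\m_\varepsilon)\otimes\m_\varepsilon,\grad \tmmathbf{\varphi}\rangle_\Om$ are of weak$\times$strong type, thanks to the above strong convergence of $\m_\varepsilon$ and of $\m_\varepsilon\otimes\m_\varepsilon$. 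The limit is the weak form of \eqref{eq:stspindiff} for a.e.\ $t$, and the Lax--Milgram uniqueness established above identifies $\s_0(t)=\mathcal{H}_s[\m_0(t)]$. For \eqref{eq:defweaksolornew0} I would expand $\grad(\tmmathbf{\varphi}\times\m_\varepsilon) = \grad \tmmathbf{\varphi}\times\m_\varepsilon+\tmmathbf{\varphi}\times\grad \m_\varepsilon$; the second piece contributes zero because $\partial_i\m_\varepsilon\cdot(\tmmathbf{\varphi}\times\partial_i\m_\varepsilon)=0$ pointwise. The remaining exchange term and the terms in $\partial_t\m_\varepsilon$, $\hd[\m_\varepsilon]$, and $j_0\s_\varepsilon$ are all weak-times-strong pairings that converge to their counterparts in $(\m_0,\s_0)$, the continuity of $\hd$ on $L^2(\RR^3)$ being standard. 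The initial condition $\m_0(0)=\m^\ast$ is preserved by weak $H^1(\Om_\Ti)$-convergence together with the trace theorem on $\{t=0\}$.

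The main obstacle I anticipate is the delicate interplay of weak and strong convergence in the nonlinear spin-current term $(\grad \s_\varepsilon\cdot\m_\varepsilon)\otimes\m_\varepsilon$ and in the coupling $\s_\varepsilon\times\m_\varepsilon$; both are handled only because Aubin--Lions supplies strong $L^2$-convergence of $\m_\varepsilon$, which in turn depends on the $\varepsilon$-uniform control of $\partial_t\m_\varepsilon$ obtained by combining the spin-testing estimate with the energy inequality. A secondary subtlety is that the $\varepsilon$-uniform coercivity of the spin current rests on the structural assumption $\beta\beta'<1$ from \ref{hyp:parameters}.
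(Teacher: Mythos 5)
Your strategy coincides with the paper's: test \eqref{eq:weakseps} with $\s_\varepsilon$ and use $\beta\beta'<1$ for coercivity of the spin current, combine with the energy inequality (absorbing the coupling term by Young) to control $\m_\varepsilon$ and $\partial_t\m_\varepsilon$ uniformly, apply Aubin--Lions to get strong convergence of $\m_\varepsilon$ (hence of $\m_\varepsilon\otimes\m_\varepsilon$, using $|\m_\varepsilon|=1$), pass to the limit by weak-times-strong pairings in both weak formulations, and settle well-posedness of \eqref{eq:stspindiff} by Lax--Milgram; the paper's Lemma~\ref{lemma:regularitysm0} does the last step by rewriting the principal part as $D_0(I-\beta\beta'\,\m\otimes\m)$, which it also needs later for elliptic regularity.

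The one step that does not hold as written is the disposal of the time-derivative term. From \eqref{eq:weakseps} you get a \emph{uniform bound} on $\varepsilon\partial_t\s_\varepsilon$ in $L^2_{\tmop{loc}}(\RR_+,H^1(\Om)^{\ast})$ --- it equals the right-hand-side functional, which is $O(1)$ in that norm --- but a uniform bound is not convergence to zero. Asserting $\varepsilon\partial_t\s_\varepsilon\to 0$ in $L^2_{\tmop{loc}}(\RR_+,H^1(\Om)^{\ast})$ is essentially equivalent to what you are trying to prove (namely that the weak limit of the right-hand side vanishes, i.e., the stationary equation), so as stated the step is circular. The repair is standard and uses an estimate you already derived: for smooth $\tmmathbf{\varphi}$ integrate by parts in time,
\begin{equation*}
\varepsilon\int_0^{\Ti}\langle\partial_t\s_\varepsilon,\tmmathbf{\varphi}\rangle_{\Om}
= -\varepsilon\int_0^{\Ti}\langle\s_\varepsilon,\partial_t\tmmathbf{\varphi}\rangle_{\Om}
+ \varepsilon\langle\s_\varepsilon(\Ti),\tmmathbf{\varphi}(\Ti)\rangle_{\Om}
- \varepsilon\langle\s^{\ast},\tmmathbf{\varphi}(0)\rangle_{\Om}.
\end{equation*}
The first term vanishes by the uniform $L^2(\Om_{\Ti})$ bound on $\s_\varepsilon$; the boundary term at $t=\Ti$ vanishes because your testing estimate gives $\varepsilon\|\s_\varepsilon(\Ti)\|_{\Om}^2\leqslant C$, hence $\varepsilon\|\s_\varepsilon(\Ti)\|_{\Om}\leqslant\sqrt{\varepsilon C}\to 0$. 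This is exactly how the paper handles it (cf.\ \eqref{eq:tempSSL} and the estimate following \eqref{eq:boundforsT}); with that substitution your argument is complete.
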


We have written the reduced equations
{\eqref{eq:CauchyequsValstationary}}--{\eqref{eq:stspindiff}} in strong form to improve the readability (their weak formulation is immediate to derive). As for
the system {\eqref{eq:systemLLg+spin2}}--{\eqref{eq:systemLLg+spin2.2}},
uniqueness of weak solutions is, in general, out of the question. In fact, when
$\tmmathbf{j}_e \equiv 0$, the system {\sllg} reduces to the classical LLG
equation for which possible nonuniqueness has been shown in {\cite{MR1167422}}.

In the statement of Theorem~\ref{thm:Theorem1}, we assumed that $\Om$ is a Lipschitz domain. Our second contribution is the following weak-strong uniqueness result, for which higher regularity of the domain $\Om$, as well as regularity assumptions on the diffusion coefficient $D_0$ become essential.

\begin{theorem}
  \label{thm:Theorem2}
Let $\Om \subset \RR^3$ be a bounded domain with a smooth boundary, and assume that \tmem{\ref{hyp:parameters}} holds with $D_0\in C^{\infty} (
  \overline{\Om})$.
  Let $\m^{\ast} \in C^{\infty} (
  \overline{\Om}, \Stwo^2 )$ and $\Ti > 0$. Suppose that
  $\m_1, \m_2 \in L^{\infty} ( \RR_+ , H^1 (
  \Om, \Stwo^2 ) )$ are two global weak solutions of
  {\tmem{{\eqref{eq:CauchyequsValstationary}}}}. If $\m_1 \in
  C^{\infty} ( \overline{\Om_{\Ti}}, \Stwo^2 )$, then
  \[ \m_1 \equiv \m_2 \quad \text{a.e.~in } \Om_{\Ti} . \]
\end{theorem}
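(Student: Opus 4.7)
\emph{Strategy.} The plan is to adapt the weak-strong uniqueness strategy of \cite{di2020weak} for plain {\acro{LLG}} and to treat the coupling with the stationary spin-accumulation operator $\mathcal{H}_s$ as a Lipschitz perturbation that can be absorbed in the Gronwall step. The target is a differential inequality $\tfrac{d}{dt}\|\m_1-\m_2\|^2_{L^2(\Om)} \leq C(t)\,\|\m_1-\m_2\|^2_{L^2(\Om)}$ with $C \in L^1_{\mathrm{loc}}(\RR_+)$, which, combined with the coincidence of initial data, yields $\m_1 \equiv \m_2$ on $\Om_{\Ti}$.

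\emph{Step 1: Lipschitz continuity of $\mathcal{H}_s$.} Given $\m, \widetilde{\m} \in H^1(\Om, \Stwo^2)$ and setting $\s := \mathcal{H}_s[\m]$, $\widetilde{\s} := \mathcal{H}_s[\widetilde{\m}]$, I would subtract the corresponding weak formulations of \eqref{eq:stspindiff} and test with $\s - \widetilde{\s}$. Using $\Dzero \geq \gamma > 0$, $\gamma_1 > 0$, and the pointwise identity $|\m|=|\widetilde{\m}|=1$, standard algebra yields
\[
\|\s - \widetilde{\s}\|_{H^1(\Om)} \;\leq\; C\,\|\m - \widetilde{\m}\|_{L^2(\Om)}
\]
with $C$ depending on $\|\tmmathbf{j}_e\|_{H^1}$, $\|\Dzero\|_{L^\infty}$, and the parameters $\gamma, \gamma_1, \gamma_2, \beta, \beta'$. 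Moreover, the smoothness of $\m_1$ assumed in the statement, together with classical elliptic regularity for \eqref{eq:stspindiff} (here using the $C^\infty$-regularity of $\partial\Om$ and of $\Dzero$), yields $\mathcal{H}_s[\m_1] \in L^\infty(\Om_{\Ti})$, which is needed below.

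\emph{Step 2: Weak-strong comparison.} Following \cite{di2020weak}, let $\tmmathbf{e}(t) := \m_1(t) - \m_2(t)$, test the weak formulation \eqref{eq:defweaksolornew0} for $\m_2$ with time-mollified admissible surrogates of $\tmmathbf{e}$, and test in parallel the strong form of \eqref{eq:CauchyequsValstationary} for $\m_1$ with the same expressions; then subtract. One obtains an identity for $\tmmathbf{e}$ in which every nonlinearity involving $\m_1$ is controlled in $L^\infty$, while the genuinely nonlinear contributions coming from $\m_2$ are absorbed by means of the energy inequality (v) of the weak-solution definition, exactly as in \cite{di2020weak}. The new spin-coupling term is bounded by
\[
\|\m_1 \times \mathcal{H}_s[\m_1] - \m_2 \times \mathcal{H}_s[\m_2]\|_{L^2(\Om)} \;\leq\; \|\tmmathbf{e}\|_{L^2(\Om)}\,\|\mathcal{H}_s[\m_1]\|_{L^\infty(\Om)} + \|\mathcal{H}_s[\m_1]-\mathcal{H}_s[\m_2]\|_{L^2(\Om)},
\]
which by Step~1 is $\leq C(\m_1)\,\|\tmmathbf{e}\|_{L^2(\Om)}$; the contribution of the demagnetizing field $\hd$ is handled analogously using its $L^2$-Lipschitz continuity.

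\emph{Main obstacle.} The delicate point is Step~2: the weak solution $\m_2$ is too irregular to serve as a test function in its own equation, and the construction of admissible approximate test functions must respect, at least in the limit, the sphere constraint $|\m_2|=1$. This is the technical heart of \cite{di2020weak}, and I would import that approximation scheme verbatim; the only verification required is that the extra spin term is stable under the same regularization, which follows because $\mathcal{H}_s$ acts locally in time and is Lipschitz by Step~1. Once the Gronwall differential inequality is established, its integration on $(0, \Ti)$ together with $\tmmathbf{e}(0) = 0$ completes the proof.
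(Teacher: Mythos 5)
Your overall plan---import the approximation scheme of \cite{di2020weak} and treat the coupling through $\mathcal{H}_s$ as a Lipschitz-type perturbation---is indeed the route the paper takes. However, two of your concrete claims do not hold as stated, and the second one breaks the Gronwall step. First, the Lipschitz estimate in your Step~1 is too strong: testing the difference of the two stationary equations with $\s-\widetilde{\s}$, the source term $-\tfrac{\beta}{2}\divb\bigl[\tmmathbf{j}_e\otimes(\m-\widetilde{\m})\bigr]$ must be integrated by parts, producing the volume term $\int_{\Om}\tmmathbf{j}_e\otimes(\m-\widetilde{\m})\Fsp\grad(\s-\widetilde{\s})$ \emph{and} the boundary term $\int_{\partial\Om}(\tmmathbf{j}_e\cdot\tmmathbf{n})\,(\m-\widetilde{\m})\cdot(\s-\widetilde{\s})$, whose control requires the trace of $\m-\widetilde{\m}$ and hence the full $H^1(\Om)$ norm on the right-hand side (this is exactly the content of Lemma~\ref{lemma:regularitysm}: the constant multiplies $\|\m_1-\m_2\|_{H^1(\Om)}^2$, not $\|\m_1-\m_2\|_{L^2(\Om)}^2$). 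Consequently your bound on the spin-coupling term in Step~2 produces a $\|\grad\tmmathbf{e}\|_{L^2(\Om)}$ contribution that cannot be closed in an $L^2$-only Gronwall inequality.

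Second, and more fundamentally, the target inequality $\tfrac{d}{dt}\|\tmmathbf{e}\|^2_{L^2(\Om)}\leqslant C(t)\|\tmmathbf{e}\|^2_{L^2(\Om)}$ is not what the machinery of \cite{di2020weak} delivers, and it is not attainable for {\acro{LLG}}: the exchange term $\m\times\lapl\m$ is not Lipschitz in $L^2$, and subtracting the two equations inevitably produces terms in $\grad\tmmathbf{e}$ and $\partial_t\tmmathbf{e}$. The correct functional is $\mathcal{E}[\tmmathbf{v}](\Ti)=\tfrac{\cex}{2}\|\grad\tmmathbf{v}(\Ti)\|^2_{\Om}+\alpha\int_0^{\Ti}\|\partial_t\tmmathbf{v}\|^2_{\Om}$ with $\tmmathbf{v}=\m_2-\m_1$; the energy inequality (v) for the weak solution, combined with the energy \emph{identity} for the strong solution and the smooth approximation of $\m_2$, yields the estimate of Lemma~\ref{lemma:forstrongweakuniq}, whose right-hand side involves $\|\tmmathbf{v}\|^2_{\Om}$, $\|\grad\tmmathbf{v}\|^2_{\Om}$ and $\pmone$. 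Gronwall is then applied to $\|\grad\tmmathbf{v}(\Ti)\|^2_{\Om}$, while the lower-order terms $\int_0^{\Ti}\|\tmmathbf{v}\|^2_{\Om}$ are absorbed into $\alpha\int_0^{\Ti}\|\partial_t\tmmathbf{v}\|^2_{\Om}$ via the time-Poincar\'e inequality $\|\tmmathbf{v}(t)\|_{\Om}\leqslant t^{1/2}\|\partial_t\tmmathbf{v}\|_{L^2(\Om_t)}$ (using $\tmmathbf{v}(0)=0$), which only works on a sufficiently short interval $(0,\Ti_*)$; one then iterates finitely many times, since $\Ti_*$ depends only on the $C^3$ norm of $\m_1$ and the physical parameters. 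Your claim that one simply "integrates on $(0,\Ti)$" skips this absorption-and-iteration structure, which is essential because the constants in front of $\int_0^{\Ti}\|\partial_t\tmmathbf{v}\|^2_{\Om}$ on the right grow with $\Ti$.
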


\begin{remark}
  A closer look to the proof of Theorem~\ref{thm:Theorem2} shows that it is
  sufficient to assume $\m^{\ast} \in C^3 ( \overline{\Om},
  \Stwo^2 )$, $D_0\in C^{3}( \overline{\Om})$, and $\m_1 \in C^3 ( \overline{\Om_{\Ti}},
  \Stwo^2 )$, but we do not dwell on this. Also, we provide a proof of the
  weak-strong uniqueness result in a more general form (see section \ref{sec:wsgensett}).
\end{remark}

In the proofs of Theorems
\ref{thm:Theorem1} and \ref{thm:Theorem2}, we will make use of some properties of the demagnetizing field operator $\hd$ that we recall here ({\tmabbr{cf.~}}{\cite{DiFratta2019}}). If $\Om$ is a
bounded domain, $\m \in L^2( \Om, \RR^3)$, and $\hd[ \m
\chi_{\Om}] \in L^2( \RR^3, \RR^3)$ is a solution of the
Maxwell--Amp{\`e}re equations {\eqref{eq:FarMaxdemag}} then, by Poincar{\'e}'s
lemma, $\hd[ \m \chi_{\Om}] = \nabla v_{\m}$, where $v_m$ is the
unique solution in $H^1 ( \RR^3)$ of the Poisson's equation
\begin{equation}
  - \Delta v_{\m} = \, \tmop{div} \left( \m \chi_{\Om} \right)
	\quad\mbox{in }\RR^3. \label{eq:potential}
\end{equation}
Therefore, the demagnetizing field can be described as the map which to every
magnetization $\m \in L^2( \RR^3, \RR^3)$ associates the
distributional gradient of the unique solution of {\eqref{eq:potential}} in
$H^1(\RR^3)$. It is easily seen that the map $- \hd : \m \in L^2
( \Om, \RR^3 ) \mapsto - \nabla v_{\m} \in L^2( \RR^3, \RR^3)$ defines a self-adjoint and positive-definite bounded linear operator
from $L^2( \RR^3, \RR^3)$ into itself:
\begin{equation}
  - \int_{\Om} \hd [ \m_1 \chi_{\Om} ] \cdot \m_2 
	= - \int_{\Om} \hd [\m_2 \chi_{\Om} ] \cdot \m_1 \label{eq:hdself}
\end{equation}
and
\begin{equation}
  - \int_{\Om} \hd [ \m_1 \chi_{\Om} ] \cdot \m_1 = \int_{\RR^3} \left|
  \hd [ \m_1 \chi_{\Om} ] \right|^2 \leqslant \int_{\Om} \left| \m_1
  \right|^2 \label{eq:conthd}
\end{equation}
for every $\m_1, \m_2 \in L^2( \Om, \RR^3)$.


\section{From {\sdllg} to {\sllg}: Proof of Theorem
\ref{thm:Theorem1}}\label{sec:proofthm1}
For convenience, we split the proof in three steps.

{\noindent}{\tmstrong{Step 1 (Uniform estimates).}} For $\Ti > 0$, we test
{\eqref{eq:weakseps}} against $\tmmathbf{\varphi}=\s_{\varepsilon}$
to obtain
\begin{eqnarray}
  \varepsilon \int_0^{\Ti} \langle \partial_t \s_{\varepsilon},
  \s_{\varepsilon} \rangle & \eqs & - \int_0^{\Ti} \int_{\Om}
  \mathcal{J} \left( \grad \s_{\varepsilon},
  \m_{\varepsilon} \right) \Fsp \grad \s_{\varepsilon} -
  \gamma_1 \int_0^{\Ti} \| \s_{\varepsilon} \|_{\Omega}^2 \nonumber\\
  & & \qquad\qquad\qquad\qquad\qquad\qquad -\frac{\beta}{2} \int_0^{\Ti} \int_{\partial \Om} (\m_{\varepsilon}
  \cdot \s_{\varepsilon}) (\tmmathbf{j}_e \cdot \tmmathbf{n})
  \label{eq:est1} .
\end{eqnarray}
An integration by parts gives
\begin{equation*}
  \int_0^{\Ti} \langle \partial_t \s_{\varepsilon},
  \s_{\varepsilon} \rangle \eqs \frac{1}{2} \int_0^{\Ti} \partial_t
  \| \s_{\varepsilon} \|_{\Om}^2 \eqs \frac{1}{2} \|
  \s_{\varepsilon} (T) \|_{\Om}^2 - \frac{1}{2} \|
  \s^{\ast} \|_{\Om}^2 .
\end{equation*}
On the other hand,
\begin{eqnarray}
  \int_{\Om} \mathcal{J} \left( \grad \s_{\varepsilon},
  \m_{\varepsilon} \right) \Fsp \grad \s_{\varepsilon} &
  \eqs & \int_{\Om} D_0 \left| \grad \s_{\varepsilon} \right|^2 -
  D_0 \beta \beta' \m_{\varepsilon} \otimes \left( \grad
  \s_{\varepsilon} \cdot \m_{\varepsilon} \right) \Fsp
  \grad \s_{\varepsilon} 
  \nonumber\\
  & & \qquad \qquad \qquad \qquad\qquad\qquad \qquad- \frac{\beta}{2}  (\tmmathbf{j}_e \otimes
  \m_{\varepsilon}) \Fsp \grad \s_{\varepsilon}  \nonumber\\
  & \eqs & \int_{\Om} D_0 \left( \left| \grad \s_{\varepsilon}
  \right|^2 - \beta \beta' \left| \grad \s_{\varepsilon} \cdot
  \m_{\varepsilon} \right|^2 \right) - \frac{\beta}{2} 
  (\tmmathbf{j}_e \otimes \m_{\varepsilon}) \Fsp \grad
  \s_{\varepsilon} . \label{eq:est2}
\end{eqnarray}
Recall that $\beta \beta'<1$ (see {\cite{zhang2002mechanisms}}). Let $\gamma_{\beta} \assign (1 - \beta \beta')>0$ and $\gamma\assign \essinf_\Om \Dzero>0$.
With
\begin{equation}
  \left| \grad \s_{\varepsilon} \right|^2 - \beta \beta' \left|
  \grad \s_{\varepsilon} \cdot \m_{\varepsilon} \right|^2
  \geqslant \gamma_{\beta} \left| \grad \s_{\varepsilon} \right|^2,
\end{equation}
the estimates {\eqref{eq:est1}} and
{\eqref{eq:est2}} lead to
\begin{align}
  & \frac{\varepsilon}{2} \left\| \s_{\varepsilon} \left( \Ti \right)
  \right\|_{\Om}^2 + \gamma_1 \int_0^{\Ti} \| \s_{\varepsilon}
  \|_{\Om}^2 + \gamma \gamma_{\beta} \int_0^{\Ti} \left\| \grad
  \s_{\varepsilon} \right\|^2_{\Om} \notag \\
  & \qquad\qquad\leqslant 
  \frac{\varepsilon}{2} \| \s^{\ast} \|_{\Om}^2 + \frac{\beta}{2}
  \int_0^{\Ti}  \int_{\Om} (\tmmathbf{j}_e \otimes \m_{\varepsilon})
  \Fsp \grad \s_{\varepsilon}  - \frac{\beta}{2} \int_0^{\Ti} \int_{\partial \Om}
  (\m_{\varepsilon} \cdot \s_{\varepsilon})
  (\tmmathbf{j}_e \cdot \tmmathbf{n}) \notag \\
    & \qquad\qquad \leqslant  \frac{\varepsilon}{2} \| \s^{\ast} \|_{\Om}^2 +
  \frac{\beta}{2} \int_0^{\Ti}  \| \tmmathbf{j}_e \|_{\Om}  \left\| \grad
  \s_{\varepsilon} \right\|_{\Om} - \frac{\beta}{2} \int_0^{\Ti} \int_{\partial \Om}
  (\m_{\varepsilon} \cdot \s_{\varepsilon})
  (\tmmathbf{j}_e \cdot \tmmathbf{n}), 
\end{align}
The continuous embedding of $H^1(\Om)$ into $L^2
( \partial \Om)$ implies the existence of $\delta > 0$
such that (we use Young's inequality)
\begin{equation}
  \varepsilon \left\| \s_{\varepsilon} \left( \Ti \right)
  \right\|_{\Om}^2 + \gamma_1 \| \s_{\varepsilon} \|_{\Om_T}^2 +
  c_{\delta} \int_0^{\Ti} \left\| \grad \s_{\varepsilon}
  \right\|^2_{\Om} \leqslant \varepsilon \| \s^{\ast} \|_{\Omega}^2
  + 
  \delta \left( \|
  \tmmathbf{j}_e \|_{\Om \times \RR_+^{}}^2 + \| \tmmathbf{j}_e \|_{\partial
  \Om \times \RR_+}^2 \right) 
  \label{eq:boundforsT}
\end{equation}
for some constant $c_{\delta} > 0$ which depends only on $\delta,\beta, \gamma,
\gamma_{\beta}$, and $\Om$.
Taking the supremum over $T>0$, we infer that
\begin{equation}
  \gamma_1 \| \s_{\varepsilon} \|_{\Om \times \RR_+}^2 + c_{\delta}
  \int_{\RR_+} \left\| \grad \s_{\varepsilon} \right\|^2_{\Om}
  \leqslant \varepsilon \| \s^{\ast} \|_{\Om}^2 + \delta \left( \|
  \tmmathbf{j}_e \|_{\Om \times \RR_+^{}}^2 + \| \tmmathbf{j}_e \|_{\partial
  \Om \times \RR_+}^2 \right) . \label{eq:unifbounds}
\end{equation}
Therefore, $(\s_{\varepsilon})$ is {\tmem{uniformly bounded}} in
$L^2 ( \RR_+, H^1 ( \Om ) )$.

{\noindent}{\tmstrong{Step 2 (The steady-state limit).}} The uniform
bound on $(\s_{\varepsilon})$ implies the existence of a
(not relabeled) subsequence $(\s_{\varepsilon})$ in $L^2 (
\RR_+ ; H^1 ( \Om ) )$ such that $\s_{\varepsilon}
\rightharpoonup \s_0$ weakly in $L^2 ( \RR_+, H^1 ( \Om
) )$ as $\varepsilon\to 0$. In particular, for every $\Ti > 0$, we have
$\s_{\varepsilon} \rightharpoonup \s_0$ weakly in $L^2
( \Om_{\Ti}, \RR^3 )$  as $\varepsilon\to 0$. Next, for every $\tmmathbf{\varphi} \in
C^{\infty} ( \overline{\Om_{\Ti}}, \RR^3 )$, equation
{\eqref{eq:weakseps}} reads, in expanded form, as
  \begin{align}
    & \int_{\Om_T} \mathcal{J} \left( \grad \s_{\varepsilon},
    \m_{\varepsilon} \right) \Fsp \grad \tmmathbf{\varphi}+ \gamma_1
    \int_{\Om_T} \s_{\varepsilon} \cdot \tmmathbf{\varphi}+ \gamma_2
    \int_{\Om_T} (\s_{\varepsilon} \times
    \m_{\varepsilon}) \cdot \tmmathbf{\varphi}\quad \nonumber\\
    & \quad + \frac{\beta}{2}
    \int_0^{\Ti} \int_{\partial \Om} (\m_{\varepsilon} \cdot
    \tmmathbf{\varphi}) (\tmmathbf{j}_e \cdot \tmmathbf{n}) \overset{\eqref{eq:weakseps}}{\eqs}
    \varepsilon
    \int_0^T \langle \s_{\varepsilon}, \partial_t \tmmathbf{\varphi}
    \rangle - \varepsilon \Big[ \langle
    \s_{\varepsilon} (T), \tmmathbf{\varphi} (T) \rangle - \langle
    \s^{\ast}, \tmmathbf{\varphi} (0) \rangle \Big] \label{eq:tempSSL}
  \end{align}
with
\begin{align}
   \int_{\Om} \mathcal{J} \left( \grad \s_{\varepsilon},
  \m_{\varepsilon} \right) \Fsp \grad \tmmathbf{\varphi}  \overset{\eqref{eq:exprJval}}{\eqs}  &
  \int_{\Om} D_0 \left( \grad \s_{\varepsilon} \Fsp \grad
  \tmmathbf{\varphi} -   \beta \beta'   \left(
  \grad \s_{\varepsilon} \cdot \m_{\varepsilon} \right) \otimes \m_{\varepsilon} :
  \grad \tmmathbf{\varphi} \right) \nonumber \\
  & \qquad\qquad\qquad\qquad\qquad\qquad- \frac{\beta}{2} \int_{\Om} (\tmmathbf{j}_e \otimes
  \m_{\varepsilon}) \Fsp \grad \tmmathbf{\varphi}. 
\end{align}
Now, we use the energy inequality {\eqref{eq:eninequ0}}. By Young's inequality, we can absorb a part of $| \langle \partial_t
\m_{\varepsilon}, \s_{\varepsilon} \rangle |$ into the
left-hand side of {\eqref{eq:eninequ0}}. 
With positive constants $\alpha_0, \alpha_1 > 0$, we find that
\begin{eqnarray}
  \mathcal{F}_{\Om} \left( \m_{\varepsilon} \left( \Ti \right)
  \right) + \alpha_0 \int_0^{\Ti} \left\| \dt{\m}_{\varepsilon}
  \right\|^2_{\Om} & \overset{\eqref{eq:eninequ0}}{\leqslant} & \mathcal{F}_{\Om} (\m^{\ast}) +
  \alpha_1 \| \s_{\varepsilon} \|_{\Om \times \RR_+}^2 \, .
\end{eqnarray}
Thus, from the
uniform bound \eqref{eq:unifbounds} on $\| \s_{\varepsilon} \|_{\Om \times \RR_+}^2$ and \eqref{eq:conthd}, we infer a uniform bound on the family $\left(\m_{\varepsilon}\right)$ in 
$L^{\infty} ( \RR_+ , H^1 ( \Om,\Stwo^2 ) )$, as well as a uniform bound on $\left(
\dt{} \m_{\varepsilon} \right)$ in $L^2 ( \RR_+, L^2 ( \Om ) )$. 
By the Aubin--Lions--Simon lemma,
there hence exists $\m_0 \in L^{\infty} ( \RR_+ , H^1 ( \Om,
\Stwo^2 ) )$ such that, up to a subsequence,
\begin{eqnarray}
  \m_{\varepsilon} \;_{} \rightarrow & \m_0 & \quad
  \text{strongly in } C^0 ( 0, T ; L^2 ( \Om, \Stwo^2 )
  ),  \label{eq:conv1meps}\\
  \dt{\m_{\varepsilon}} \; \rightharpoonup & \dt{\m_0} &
  \quad \text{weakly in } L^2 ( \RR_+, L^2 ( \Om ) ) . 
  \label{eq:conv2meps}
\end{eqnarray}
In particular, $\m_{\varepsilon} \rightarrow \m_0$
strongly in $L^2 ( \Om_T )$, from which it follows that
$\m_{\varepsilon} \otimes \m_{\varepsilon} \rightarrow
\m_0 \otimes \m_0$ strongly in $L^2 ( \Om_T )$.
Indeed, $|\m_{\varepsilon}|=1=|\m_0|$ a.e. in $\Om_\Ti$ guarantees that
\begin{eqnarray}
  | \m_{\varepsilon} \otimes \m_{\varepsilon}
  -\m_0 \otimes \m_0 | & \leqslant & |
  \m_{\varepsilon} \otimes (\m_{\varepsilon}
  -\m_0) | + | (\m_{\varepsilon} -\m_0) \otimes
  \m_0 | \nonumber\\
  & \leqslant & 2 | \m_{\varepsilon} -\m_0 | . 
\end{eqnarray}
Hence, for every $\tmmathbf{\varphi} \in C^{\infty} (
\overline{\Omega_T}, \RR^3)$, we have
\begin{equation}
  \int_0^{\Ti} \left( \mathcal{J} \left( \grad \s_{\varepsilon},
  \m_{\varepsilon} \right), \grad \tmmathbf{\varphi} \right)_{\Om}
  \rightarrow \int_0^{\Ti} \left( \mathcal{J} \left( \grad \s_0,
  \m_0 \right), \grad \tmmathbf{\varphi} \right)_{\Om} .
\end{equation}
Overall, using that $\m_{\varepsilon} \rightharpoonup
\m_0$ weakly in $L^2 ( 0, \Ti ; L^2 ( \partial \Om )
)$, we obtain
\begin{eqnarray}
  - \lim_{\varepsilon \rightarrow 0} \varepsilon \left(
  \s_{\varepsilon} \left( \Ti \right), \tmmathbf{\varphi} \left( \Ti
  \right) \right)_{\Om} &\overset{\eqref{eq:tempSSL}}{\eqs} & \int_0^{\Ti} \left( \mathcal{J} \left( \grad
  \s_0, \m_0 \right), \grad \tmmathbf{\varphi}
  \right)_{\Om} + \int_0^{\Ti} (\gamma_1 \s_0 + \gamma_2
  \s_0 \times \m_0, \tmmathbf{\varphi})_{\Om} \nonumber\\
  &  & \qquad\qquad \qquad \qquad + \frac{\beta}{2}
  \int_0^{\Ti} \int_{\partial \Om} (\m_0 \cdot \tmmathbf{\varphi})
  (\tmmathbf{j}_e \cdot \tmmathbf{n}) , \label{eq:tocomplim}
\end{eqnarray}
since $\lim_{\varepsilon \rightarrow 0}  \int_0^{\Ti} \langle
\s_{\varepsilon}, \partial_t \tmmathbf{\varphi} \rangle =
\int_0^{\Ti} \langle \s_0, \partial_t \tmmathbf{\varphi} \rangle$.

It remains to compute the limit on the left-hand side of
{\eqref{eq:tocomplim}}. 
To this end, we observe that 
{\eqref{eq:boundforsT}} leads to
$\left\| \s_{\varepsilon} \left( \Ti
\right) \right\|_{\Om}^2 \leqslant \| \s^{\ast} \|_{\Om}^2 + \varepsilon^{-1}\delta
( \|
  \tmmathbf{j}_e \|_{\Om \times \RR_+^{}}^2 + \| \tmmathbf{j}_e \|_{\partial
  \Om \times \RR_+}^2 )$. This gives the estimate
\begin{equation}
\varepsilon \left\|
\s_{\varepsilon} \left( \Ti \right) \right\|_{\Om} \leqslant
\left[\varepsilon^2 \| \s^{\ast} \|_{\Om}^2 + \varepsilon \delta \left( \|
  \tmmathbf{j}_e \|_{\Om \times \RR_+^{}}^2 + \| \tmmathbf{j}_e \|_{\partial
  \Om \times \RR_+}^2 \right) \right]^{\frac{1}{2}},
\end{equation}
from which it follows that
\begin{equation}
  \varepsilon \left| \left( \s_{\varepsilon} \left( \Ti \right),
  \tmmathbf{\varphi} \left( \Ti \right) \right)_{\Om} \right| \leqslant 
  \varepsilon \left\| \s_{\varepsilon} \left( \Ti \right)
  \right\|_{\Om} \left\| \tmmathbf{\varphi} \left( \Ti \right) \right\|_{\Om}
  \rightarrow 0. \nonumber
\end{equation}
Summarizing, for every $\tmmathbf{\varphi} \in C^{\infty} (
\overline{\Om_{\Ti}}, \RR^3)$, the family $(\s_{\varepsilon},
\m_{\varepsilon})_{\varepsilon \in \RR_+}$ converges, for
$\varepsilon \rightarrow 0$, to a solution of the equation
\begin{align}
  \int_0^{\Ti} \left( \mathcal{J} \left( \grad \s_0, \m_0
  \right), \grad \tmmathbf{\varphi} \right)_{\Om} & + \int_0^{\Ti} (\gamma_1
  \s_0 + \gamma_2 \s_0 \times \m_0,
  \tmmathbf{\varphi})_{\Om} \nonumber\\ 
  & \qquad\qquad\qquad +\frac{\beta}{2} \int_0^{\Ti} \int_{\partial \Om}
  (\m_0 \cdot \tmmathbf{\varphi}) (\tmmathbf{j}_e \cdot \tmmathbf{n}) = 0. 
\end{align}
By density, the previous relation holds for every $\tmmathbf{\varphi} \in L^2
( \RR_+, H^1 ( \Om) )$. This gives the limit equation
{\eqref{eq:stspindiff}}. 

Finally, {\eqref{eq:CauchyequsValstationary}} follows by a standard application of the convergence relations 
\eqref{eq:conv1meps} and \eqref{eq:conv2meps} to the weak formulation of \acro{LLG} given in \eqref{eq:defweaksolornew0}.

\medskip

{\noindent}{\tmstrong{Step 3 (Unique solvability of the limit spin diffusion equation
\eqref{eq:stspindiff}.} The proof is completed as soon as we show
that for every $\m \in H^1( \Om, \Stwo^2)$ there exists
a {\tmem{unique}} solution $\s \assign \mathcal{H}_s [\m]$
of the stationary spin-diffusion equation {\eqref{eq:stspindiff}}. This is the
content of the next lemma, which also provides some details on the regularity
of the operator $\mathcal{H}_s$ that is exploited in the proof of the
weak-strong uniqueness theorem.

\begin{lemma} Let $\Om \subset \RR^3$ be a bounded Lipschitz domain.
  \label{lemma:regularitysm0} For any $\m \in H^1 ( \Om, \Stwo^2
  )$ and $\mathcal{J}$ given by {\tmem{{\eqref{eq:exprJval}}}}, there exists a unique solution $\s \assign \mathcal{H}_s
  [\m]_{} \in H^1 ( \Om, \RR^3 )$ of the stationary
  spin-diffusion equation
  \begin{equation}
    - \divb \left[ \mathcal{J} \left( \grad \s, \m \right)
    \right] + \gamma_1 \s+ \gamma_2 \s \times \m
    \eqs 0 \text{ {\tmem{in}} } \Om,  \quad \text{\tmem{subject to }} \partial_{\tmmathbf{n}} \s \eqs
  0 \text{\tmem{ on }} \partial \Om.\label{eq:stspindifftemp}
  \end{equation}
  Moreover, the
  operator $\mathcal{H}_s : H^1 ( \Om, \Stwo^2 ) \rightarrow H^1
  ( \Om, \RR^3 )$ maps the space $C^{k + 1} ( \overline{\Om}, \Stwo^2
  )$ into the space $C^k ( \bar{\Omega}, \RR^3 )$ provided that
  $D_0,\tmmathbf{j}_e \in C^{k + 1} ( \overline{\Om} )$ and $\Om$ is of
  class $C^{k + 1, 1}$.
\end{lemma}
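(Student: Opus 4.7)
The plan is to recast \eqref{eq:stspindifftemp} as a linear problem in $\s$ in which $\m$ enters only through bounded coefficients, and then to apply the Lax--Milgram lemma on $H^1(\Om,\RR^3)$. Testing \eqref{eq:stspindifftemp} against $\tmmathbf{\varphi}\in H^1(\Om,\RR^3)$ and integrating $\divb\mathcal{J}$ by parts by means of \eqref{eq:divthmvec}, the homogeneous Neumann condition $\partial_{\tmmathbf{n}}\s=0$ eliminates the boundary contributions coming from the diffusive piece of $\mathcal{J}$ but not from the inhomogeneous part $-\tfrac{\beta}{2}\tmmathbf{j}_e\otimes\m$. The resulting weak formulation is $a(\s,\tmmathbf{\varphi})=L(\tmmathbf{\varphi})$ with
\[
a(\s,\tmmathbf{\varphi}) \assign \int_\Om \Dzero\bigl[\grad\s \Fsp \grad\tmmathbf{\varphi} - \beta\beta'\,(\grad\s\cdot\m)\otimes\m \Fsp \grad\tmmathbf{\varphi}\bigr] + \gamma_1\int_\Om \s\cdot\tmmathbf{\varphi} + \gamma_2\int_\Om(\s\times\m)\cdot\tmmathbf{\varphi}
\]
and
\[
L(\tmmathbf{\varphi}) \assign \frac{\beta}{2}\int_\Om (\tmmathbf{j}_e\otimes\m) \Fsp \grad\tmmathbf{\varphi} - \frac{\beta}{2}\int_{\partial\Om}(\m\cdot\tmmathbf{\varphi})(\tmmathbf{j}_e\cdot\tmmathbf{n}).
\]

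Continuity of $a$ and $L$ on $H^1(\Om,\RR^3)$ follows at once from $\Dzero\in L^\infty(\Om)$, $|\m|=1$ a.e., $\tmmathbf{j}_e\in H^1(\Om)$, and the trace embedding $H^1(\Om)\hookrightarrow L^2(\partial\Om)$. The only point that requires structural input is coercivity. Testing $a(\cdot,\cdot)$ against $\s$ itself, the cross-product contribution vanishes pointwise since $(\s\times\m)\cdot\s=0$; Cauchy--Schwarz combined with $|\m|=1$ yields $|\grad\s\cdot\m|^2\leq|\grad\s|^2$, so
\[
\Dzero\bigl(|\grad\s|^2-\beta\beta'|\grad\s\cdot\m|^2\bigr) \;\geq\; \gamma(1-\beta\beta')\,|\grad\s|^2 \;=\; \gamma\gamma_\beta\,|\grad\s|^2
\]
with $\gamma_\beta\assign 1-\beta\beta'>0$ by \ref{hyp:parameters}. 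Together with the $L^2$ control $\gamma_1\|\s\|_\Om^2$, this gives $a(\s,\s)\gtrsim \|\s\|_{H^1(\Om)}^2$, and Lax--Milgram delivers a unique weak solution $\s=\mathcal{H}_s[\m]\in H^1(\Om,\RR^3)$.

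For the regularity assertion I would run a standard bootstrap based on Schauder estimates of Agmon--Douglis--Nirenberg type, up to the boundary, for linear second-order elliptic systems in divergence form with natural (Neumann) boundary conditions. Repeating the Cauchy--Schwarz computation above on a rank-one matrix $\xi\otimes\eta$ shows that the principal coefficient tensor $A_{ij\alpha\beta}(x)=\Dzero(x)\bigl(\delta_{ij}\delta_{\alpha\beta}-\beta\beta'\,m_i(x)m_j(x)\delta_{\alpha\beta}\bigr)$ satisfies a uniform Legendre--Hadamard condition with ellipticity constant $\gamma\gamma_\beta$, which in turn implies the Lopatinskii--Shapiro complementing condition for the associated Neumann operator. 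The zeroth-order term $\gamma_2\s\times\m$ is a smooth perturbation, and the data depends linearly on $\tmmathbf{j}_e\otimes\m$ and its divergence. Iterating Schauder estimates on tangential difference quotients (together with the standard trick of reading the normal derivatives from the equation itself), and using $\m\in C^{k+1}(\overline{\Om},\Stwo^2)$, $\Dzero,\tmmathbf{j}_e\in C^{k+1}(\overline{\Om})$, $\partial\Om\in C^{k+1,1}$, one bootstraps $\s$ from $H^1(\Om,\RR^3)$ to $C^k(\overline{\Om},\RR^3)$. The step that demands the most care is the verification of the complementing condition with $\m$-dependent principal coefficients, but once the uniform Legendre--Hadamard bound above is secured, it is a direct computation.
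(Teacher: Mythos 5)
Your existence--uniqueness argument is the same as the paper's: both rewrite the principal part as $-\sum_i\partial_i\bigl(D_0[I-\beta\beta'(\m\otimes\m)]\partial_i\s\bigr)$, observe the uniform bound $A_{\m}\tmmathbf{\xi}\cdot\tmmathbf{\xi}\geqslant\gamma(1-\beta\beta')|\tmmathbf{\xi}|^2$ together with $(\s\times\m)\cdot\s=0$, and invoke Lax--Milgram. Where you diverge is the regularity bootstrap. The paper stays entirely in the $L^2$ framework: it cites the global regularity theorem for strongly elliptic systems with coercive bilinear forms (McLean, Thm.~4.18) to get $\s_{\m}\in H^{k+2}(\Om)$ from $A_{\m},K_{\m}\in C^{k,1}(\overline{\Om})$ and $\tmmathbf{f}_{\m}\in H^k(\Om)$, and then concludes $\s_{\m}\in C^k(\overline{\Om})$ by Morrey's embedding in dimension three. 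This route buys you the complementing condition for free, since the cited theorem is formulated precisely for the variational (conormal) boundary condition attached to a coercive form. Your Schauder/ADN route is viable and would in fact yield the slightly stronger conclusion $\s\in C^{k+1,\alpha}(\overline{\Om})$, but it is heavier machinery and contains one claim that needs tightening: the Legendre--Hadamard condition alone does \emph{not} imply the Lopatinskii--Shapiro complementing condition for a general boundary operator. What rescues your argument is that the tensor here satisfies the stronger uniform Legendre condition --- $\sum_i A_{\m}Me_i\cdot Me_i\geqslant\gamma(1-\beta\beta')|M|^2$ for \emph{all} matrices $M$, not just rank-one ones, because $A_{\m}$ is pointwise symmetric positive definite --- and for such variational systems the natural Neumann condition is automatically complementing. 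You should state it that way; as written, the justification of the boundary estimates rests on a false implication, even though the conclusion is correct for this particular system.
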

\begin{remark}
  \label{rmk:onOmdiffOmp}Lemma \ref{lemma:regularitysm0} is the only point
  where a difference arise if one assumes that $\Om$ is strictly included in
  $\Omp .$ In this case, since $\m \equiv 0$ in $\Omp \setminus
  \Om$, a similar result on the regularity in $\Om$ (but up to the boundary
  $\partial \Om$) of the solutions of {\eqref{eq:stspindifftemp}} cannot be
  inferred due to the jump discontinuity of the $\m$-dependent
  coefficients. This is the reason why, when dealing with partial regularity
  results for weak solutions of the {\sdllg} equation, one assumes that $\Om =
  \Omp$ ({\tmabbr{cf.}}~{\cite{MR2411058,MR4021901}}). That said, everything
  we state still works in the case $\Om \subset \Omp$ as soon as one agrees
  that strong solutions have a smooth induced
  spin accumulation on the interface $\partial \Om \cap \Omp$.
\end{remark}

\begin{proof}
  First, we note that the spin-diffusion equation can be rearranged in a more
  convenient form. The weak formulation of {\eqref{eq:stspindiff}} gives the
  relation
  \begin{align}
    \int_{\Om} D_0  \left[ \grad \s- \beta \beta' (\nabla
    \s \cdot \m_{}) \otimes
    \m \right]  \Fsp  \grad \tmmathbf{\varphi} &+
    \int_{\Om} (\gamma_1 \s+ \gamma_2 \s \times
    \m) \cdot \tmmathbf{\varphi} \nonumber \\
    & \qquad\qquad\qquad\eqs  - \frac{\beta}{2}
    \left\langle \divb (\tmmathbf{j}_e \otimes \m),
    \tmmathbf{\varphi} \right\rangle_{\Omega} \label{eq:toapplyLaxMilg}
  \end{align}
  for every $\tmmathbf{\varphi} \in H^1 ( \Om )$. We observe that
  \begin{eqnarray}
    D_0  \left[ \grad \s- \beta \beta' (\nabla \s \cdot
    \m_{}) \otimes \m \right]  \Fsp 
    \grad \tmmathbf{\varphi} & \eqs & D_0  \left[ \grad^{\T} \s-
    \beta \beta' (\m \otimes \m^{}) \nabla^{\T}
    \s \right]  \Fsp  \grad^{\T}
    \tmmathbf{\varphi} \nonumber\\
    & \eqs & D_0 \left[ \left( I - \beta \beta' (\m \otimes
    \m^{}) \right) \grad^{\T} \s \right] 
    \Fsp  \grad^{\T} \tmmathbf{\varphi}. 
  \end{eqnarray}
  Therefore, {\eqref{eq:stspindiff}} can be written as
  \begin{equation}
    - \sum_{i = 1}^3 \partial_i \left( D_0 \left( I - \beta \beta'
    (\m \otimes \m^{}) \right) \partial_i
    \s \right) + \gamma_1 \s+ \gamma_2 \s \times
    \m= - \frac{\beta}{2} \divb (\tmmathbf{j}_e \otimes
    \m) . \label{eq:sysdivform}
  \end{equation}
  For every $\tmmathbf{\xi} \in \RR^3$, it holds that
  \begin{equation}
    [I - \beta \beta' (\m \otimes \m^{})] \tmmathbf{\xi}
    \cdot \tmmathbf{\xi} \eqs | \tmmathbf{\xi} |^2 - \beta \beta'
    (\m \cdot \tmmathbf{\xi})^2 \eqs (1 - \beta \beta') |
    \tmmathbf{\xi} |^2 + \beta \beta' | \tmmathbf{\xi} \times \m |^2\, .
  \end{equation}
  Therefore, the matrix $A_{\m} \assign D_0 [I - \beta \beta'
  (\m \otimes \m^{})]$ is uniformly positive definite, i.e.,
  \begin{equation}
    A_{\m} (x) \tmmathbf{\xi} \cdot \tmmathbf{\xi} \geqslant \gamma
    (1 - \beta \beta') | \tmmathbf{\xi} |^2 \quad \mbox{for all }\tmmathbf{\xi} \in
    \RR^3
  \end{equation}
  with $\gamma\assign \essinf_\Om \Dzero>0$.
	Hence, if we set $\tmmathbf{f}_{\m} \assign - (\beta/2)
  \divb (\tmmathbf{j}_e \otimes \m)$ and denote by
  $K_{\m}$ the matrix in $\RR^{3 \times 3}$ such that
  $K_{\m} \tmmathbf{\eta} \eqs \gamma_1 \tmmathbf{\eta}+ \gamma_2
  \tmmathbf{\eta} \times \m$ for every $\tmmathbf{\eta} \in \RR^3$,
  we can rearrange {\eqref{eq:sysdivform}} into the form of a strongly
  elliptic system:
  \begin{equation}
    - \sum_{i = 1}^3 \partial_i (A_{\m} \partial_i \s) +
    K_{\m} \s=\tmmathbf{f}_{\m} .
    \label{eq:sysdivformMcLean}
  \end{equation}
  Note that $K_{\m} \tmmathbf{\eta} \cdot \tmmathbf{\eta} \geqslant
  0$ for every $\tmmathbf{\eta} \in \RR^3$. Therefore, the bilinear form on the left-hand side of \eqref{eq:sysdivformMcLean} is uniformly elliptic in the sense of the Lax--Milgram lemma. It follows that,
  for every $\m
  \in H^1 ( \Om, \Stwo^2 )$, there exists a unique
  $\s_{\m} \in H^1 ( \Om )$ that satisfies the
  elliptic system {\eqref{eq:sysdivformMcLean}} and, therefore,  \eqref{eq:toapplyLaxMilg}. Moreover, by elliptic
  regularity {\cite[{\tmabbr{Thm.}}~4.18, p.~137]{MR1742312}}, we obtain
  $\s_{\m} \in H^{k + 2} ( \Om )$ provided that
  $A_{\m}$, $K_{\m} \in C^{k, 1} ( \overline{\Om}
  )$ and $\tmmathbf{f}_{\m} \in H^k (\Omega)$. Thus, if
  $\m, \tmmathbf{j}_e \in C^{k + 1} ( \overline{\Om} )$
  then $\tmmathbf{f}_{\m} \in C^k ( \overline{\Om} )$ and
  $\s_{\m} \in H^{k + 2} ( \Om )$. Eventually,
  by Morrey's inequality in dimension three {\cite[{\tmabbr{Thm.}}~12.55 p.~384]{MR3726909}}, we
  conclude that $\s_{\m} \in C^k ( \overline{\Om}
  )$ provided that $\m, \tmmathbf{j}_e, D_0 \in C^{k + 1} (
  \overline{\Om} )$. 
	This completes the proof.
\end{proof}

\section{Weak-Strong uniqueness of solutions (proof of Theorem
{\tmname{\ref{thm:Theorem2}}})}\label{sec:wekastrong}

\subsection{A regularity result}\label{sec:wsgensett0} The proof of the
weak-strong uniqueness of the
solutions of LLG is given in section \ref{sec:wsgensett}. Our argument allows us to obtain weak-strong uniqueness for a class of nonlinearities more general than the one introduced by $\mathcal{H}_s$. A precise definition of the type of nonlinearities covered by our result is given in the next section and is motived by the 
following result.
\begin{lemma}
  \label{lemma:regularitysm}For $\m_1 \in C^2 ( \overline{\Om},
  \Stwo^2 )$ and $\m_2 \in H^1 ( \Om, \Stwo^2 )$,
  the following estimate holds:
  \begin{equation}
    \| \mathcal{H}_s [\m_1] -\mathcal{H}_s [\m_2]
    \|^2_{H^1 ( \Om )} \; \leqslant \; c_L^2 \left(
    \| \mathcal{H}_s [\m_1] \|_{C^1 ( \overline{\Om} )}^2
    + \| \tmmathbf{j}_e \|_{C^0 ( \overline{\Om} )} \right)  \|
    \m_1 -\m_2 \|_{H^1( \Om )}^2,
    \label{eq:estimatem1m2}
  \end{equation}
  with $c_L^2$ depending only on $D_0, \beta, \beta', \gamma_1, \gamma_2$, and $\Omega$.
\end{lemma}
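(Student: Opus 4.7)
Write $\s_1\assign \mathcal{H}_s[\m_1]$ and $\s_2\assign\mathcal{H}_s[\m_2]$, both existing and unique by Lemma~\ref{lemma:regularitysm0}. The plan is to derive an equation for $\delta\s\assign \s_1-\s_2$ by subtraction, test it against $\delta\s$, and exploit the uniform ellipticity already established in the proof of Lemma~\ref{lemma:regularitysm0} to absorb the highest-order term, bounding the remaining cross terms in favor of $\|\m_1-\m_2\|_{H^1(\Om)}$.

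More concretely, I would first use the weak formulation \eqref{eq:toapplyLaxMilg} satisfied by each $\s_i$ with test function $\tmmathbf{\varphi}=\delta\s$, and subtract. The difference of the fluxes splits as
\begin{equation*}
  \mathcal{J}\left(\grad\s_1,\m_1\right)-\mathcal{J}\left(\grad\s_2,\m_2\right)
  \;=\; D_0\grad(\delta\s)-D_0\beta\beta'\,\bigl[f_1\otimes\m_1-f_2\otimes\m_2\bigr]-\tfrac{\beta}{2}\,\tmmathbf{j}_e\otimes\delta\m,
\end{equation*}
with $f_i\assign\grad\s_i\cdot\m_i$ and $\delta\m\assign\m_1-\m_2$. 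Using the elementary identity $a\otimes a-b\otimes b=(a-b)\otimes a+b\otimes (a-b)$ applied twice (once at the level of the $\m_i$ outer factor, once inside $f_i$), I would write
\begin{equation*}
  f_1\otimes\m_1-f_2\otimes\m_2
  \;=\; (\grad(\delta\s)\cdot\m_2)\otimes\m_2
  +(\grad\s_1\cdot\delta\m)\otimes\m_2
  +(\grad\s_1\cdot\m_1)\otimes\delta\m,
\end{equation*}
so that everything depending on the \emph{unknown} $\delta\s$ is concentrated in the quadratic form $(\grad(\delta\s)\cdot\m_2)\otimes\m_2:\grad(\delta\s)=|\grad(\delta\s)\cdot\m_2|^2$, while the remaining two terms carry only $\grad\s_1$ times $\delta\m$.

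Testing against $\delta\s$ and using the analogous splitting $\s_1\times\m_1-\s_2\times\m_2=(\delta\s)\times\m_2+\s_1\times\delta\m$ (whose first piece is orthogonal to $\delta\s$), I would obtain
\begin{equation*}
  \int_\Om D_0\bigl(|\grad(\delta\s)|^2-\beta\beta'|\grad(\delta\s)\cdot\m_2|^2\bigr)
  +\gamma_1|\delta\s|^2
  \;=\; \mathcal{R}_1+\mathcal{R}_2+\mathcal{R}_3,
\end{equation*}
where $\mathcal{R}_1,\mathcal{R}_2$ collect the two $\grad\s_1$-terms above tested against $\grad(\delta\s)$, and $\mathcal{R}_3=-\gamma_2\int_\Om(\s_1\times\delta\m)\cdot\delta\s-\tfrac{\beta}{2}\int_\Om(\tmmathbf{j}_e\otimes\delta\m):\grad(\delta\s)$. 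Using $|\m_2|=1$ and $D_0\geqslant\gamma$, exactly as in Lemma~\ref{lemma:regularitysm0}, the left-hand side is bounded below by $\gamma(1-\beta\beta')\|\grad(\delta\s)\|_\Om^2+\gamma_1\|\delta\s\|_\Om^2$, which controls $\|\delta\s\|_{H^1(\Om)}^2$ up to the structural constant.

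For the right-hand side, Hölder's inequality gives $|\mathcal{R}_1|+|\mathcal{R}_2|\lesssim\|\grad\s_1\|_{L^\infty(\Om)}\|\delta\m\|_{L^2(\Om)}\|\grad(\delta\s)\|_\Om$ and $|\mathcal{R}_3|\lesssim\|\s_1\|_{L^\infty(\Om)}\|\delta\m\|_{L^2(\Om)}\|\delta\s\|_\Om+\|\tmmathbf{j}_e\|_{L^\infty(\Om)}\|\delta\m\|_{L^2(\Om)}\|\grad(\delta\s)\|_\Om$. An application of Young's inequality, absorbing a small multiple of $\|\grad(\delta\s)\|_\Om^2+\|\delta\s\|_\Om^2$ into the left-hand side, yields
\begin{equation*}
  \|\delta\s\|_{H^1(\Om)}^2\;\lesssim\;\bigl(\|\s_1\|_{C^1(\overline{\Om})}^2+\|\tmmathbf{j}_e\|_{C^0(\overline{\Om})}\bigr)\|\delta\m\|_{L^2(\Om)}^2,
\end{equation*}
which is the claim after bounding $\|\delta\m\|_{L^2(\Om)}\leqslant\|\delta\m\|_{H^1(\Om)}$ and absorbing $D_0,\beta,\beta',\gamma_1,\gamma_2$ into $c_L$. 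The only delicate point is the algebraic bookkeeping in decomposing the quadratic $\m$-dependence of the flux so that the unknown appears either in the coercive quadratic form or paired with the a priori $C^1$-bounded function $\s_1$; the rest is the standard Lax--Milgram/Young toolbox already used in Lemma~\ref{lemma:regularitysm0}.
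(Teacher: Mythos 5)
Your proposal follows the paper's own argument in all essential respects: subtract the two weak formulations, test with $\delta\s=\s_1-\s_2$, use the same three-term decomposition of $(\grad\s_1\cdot\m_1)\otimes\m_1-(\grad\s_2\cdot\m_2)\otimes\m_2$ (yours is the transpose of the paper's, term for term), kill the harmless piece of the cross-product term by orthogonality, and absorb via Young. Your handling of coercivity is in fact marginally cleaner: you keep the exact quadratic form $|\grad\delta\s|^2-\beta\beta'|\grad\delta\s\cdot\m_2|^2\geqslant(1-\beta\beta')|\grad\delta\s|^2$, whereas the paper bounds the $(\m_2\otimes\m_2)\grad^{\T}\delta\s$ contribution crudely by $|\grad\delta\s|^2$ and ends up with the constant $1-\beta\beta'(1+\delta^2)$, which then forces $\delta$ small. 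Both work since $\beta\beta'<1$.

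There is one genuine omission. The weak formulation \eqref{eq:weakseps} (equivalently, the right-hand side $-\tfrac{\beta}{2}\langle\divb(\tmmathbf{j}_e\otimes\m),\cdot\rangle_{\Om}$ of \eqref{eq:toapplyLaxMilg}) carries, besides the interior term $\tfrac{\beta}{2}\int_{\Om}(\tmmathbf{j}_e\otimes\m)\Fsp\grad\tmmathbf{\varphi}$ that you retained in $\mathcal{R}_3$, the boundary integral $-\tfrac{\beta}{2}\int_{\partial\Om}(\m\cdot\tmmathbf{\varphi})(\tmmathbf{j}_e\cdot\tmmathbf{n})$. After subtraction and testing with $\delta\s$ this produces the additional term $\tfrac{\beta}{2}\int_{\partial\Om}(\tmmathbf{j}_e\cdot\tmmathbf{n})\,(\delta\m\cdot\delta\s)$, which you dropped. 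The paper bounds it by $\|\tmmathbf{j}_e\|_{C^0(\overline{\Om})}\|\delta\m\|_{L^2(\partial\Om)}\|\delta\s\|_{L^2(\partial\Om)}$ and then uses the trace theorem plus Young to absorb $c_{\partial\Om}^2\delta^2\|\delta\s\|_{H^1(\Om)}^2$ into the left-hand side. This term is precisely why the right-hand side of \eqref{eq:estimatem1m2} must involve the full $H^1(\Om)$ norm of $\m_1-\m_2$: your claimed intermediate bound in terms of $\|\delta\m\|_{L^2(\Om)}^2$ alone is not attainable, because the trace of $\delta\m$ is only controlled by $\|\delta\m\|_{H^1(\Om)}$. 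The fix is routine and the lemma as stated still follows, but the step must be included.
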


\begin{proof}
  With $\s_1 \assign \mathcal{H}_s [\m_1]$
  and $\s_2 \assign \mathcal{H}_s [\m_2]$, the following
  relations hold in a weak sense:
  \begin{align}
    - \divb \left[ D_0 \grad \s_1 \right] + & \beta \beta'  \divb 
    [D_0  (\nabla \s_1 \cdot \m_1) \otimes
    \m_1] \nonumber\\
    & \qquad\qquad+ \gamma_1 \s_1 + \gamma_2
    \s_1 \times \m_1  \eqs  - \frac{\beta}{2} \divb
    (\tmmathbf{j}_e \otimes \m_1)\, , \\
    - \divb \left[ D_0 \grad \s_2 \right] + &\beta \beta'  \divb 
    [D_0  (\nabla \s_2 \cdot \m_2) \otimes
    \m_2]\nonumber\\
    & \qquad\qquad + \gamma_1 \s_2 + \gamma_2
    \s_2 \times \m_2  \eqs  - \frac{\beta}{2} \divb
    (\tmmathbf{j}_e \otimes \m_2)\,  . 
  \end{align}
  We recall that the $\divb$ operator acts on columns. In a weak sense, it follows that
    \begin{align}
      &- \divb \left( D_0 \grad (\s_1 -\s_2) \right) +
      \beta \beta'  \divb [D_0  ((\nabla \s_1 \cdot \m_1)
      \otimes \m_1 - (\nabla \s_2 \cdot \m_2)
      \otimes \m_2)] \nonumber\\
      &\qquad \qquad \quad\quad + \gamma_1 (\s_1
      -\s_2) + \gamma_2 (\s_1 \times \m_1
      -\s_2 \times \m_2) \eqs - \frac{\beta}{2} \divb
      [\tmmathbf{j}_e \otimes (\m_1 -\m_2)] . \label{eq:difwf}
    \end{align}
  We note that $\s_1 \times \m_1 -\s_2 \times
  \m_2 =\s_1 \times (\m_1 -\m_2) +
  (\s_1 -\s_2) \times \m_2$, where
  the last term disappears when dot multiplied by
  $(\s_1 -\s_2)$. Hence, we have
  \begin{align}
    \int_{\Om} | [\s_1 \times \m_1 -\s_2 \times
    \m_2] \cdot (\s_1 -\s_2) | & \eqs 
    \int_{\Om} | \s_1 \cdot [(\m_1 -\m_2) \times
    (\s_1 -\s_2)] | \notag \\
    & \leqslant  \| \s_1 \|_{C(\bar \Omega)}  \| \m_1
    -\m_2 \|_{\Om}  \| \s_1 -\s_2 \|_{\Om}
    \notag \\
    & \leqslant  \frac{\| \s_1 \|_{C(\bar \Omega)}}{2 \delta^2} \|
    \m_1 -\m_2 \|_{\Om}^2 + \frac{\delta^2}{2} \|
    \s_1 -\s_2 \|_{\Om}^2 .  \label{eq:1est}
  \end{align}
  Also, we have
  \begin{align}
    & [(\nabla \s_1 \cdot \m_1) \otimes \m_1 -
    (\nabla \s_2 \cdot \m_2) \otimes \m_2]^{\T} \eqs
    (\m_1 \otimes \m_1) \nabla^{\T} \s_1 -
    (\m_2 \otimes \m_2) \nabla^{\T} \s_2 \notag \\
    & \; \eqs ((\m_1 -\m_2) \otimes \m_1) \nabla^{\T}
    \s_1  + (\m_2 \otimes \m_1) \nabla^{\T}
    \s_1  - (\m_2 \otimes
    \m_2) \nabla^{\T} \s_2 \notag \\
    &\; \eqs ((\m_1 -\m_2) \otimes \m_1)
    \nabla^{\T} \s_1 
    + (\m_2 \otimes (\m_1
    -\m_2)) \nabla^{\T} \s_1 + (\m_2 \otimes
    \m_2) \nabla^{\T} (\s_1 -\s_2) . 
    \label{eq:temptobound2}
  \end{align}
  Multiplying {\eqref{eq:temptobound2}}
  by $\tmmathbf{\varphi} \assign \s_1 -\s_2$ and applying the
  Young inequality shows for any $\delta > 0$ that
  \begin{align}
    & \left. | (\nabla \s_1 \cdot \m_1) \otimes
    \m_1 - (\nabla \s_2 \cdot \m_2) \otimes
    \m_2] \Fsp \grad (\s_1 -\s_2) \right| \nonumber\\
    & \qquad\qquad\leqslant 2 \left\| \grad \s_1 \right\|_{C(\bar \Omega)} |
    \m_1 -\m_2 |  \left| \grad (\s_1
    -\s_2) \right| + \left| \grad (\s_1 -\s_2)
    \right|^2 \nonumber \\
    & \qquad\qquad\leqslant\frac{\left\| \grad \s_1
    \right\|_{C(\bar \Omega)}^2}{\delta^2} | \m_1 -\m_2 |^2
    + (1 + \delta^2) \left| \grad (\s_1
    -\s_2) \right|^2 .  \label{eq:2est}
  \end{align}
  It follows that
    \begin{align}
      & \left| \grad (\s_1 -\s_2) \right|^2 - \beta \beta' 
      [(\nabla \s_1 \cdot \m_1) \otimes \m_1 -
      (\nabla \s_2 \cdot \m_2) \otimes \m_2]
      \Fsp \grad (\s_1 -\s_2)\nonumber\\
      & \qquad \qquad \geqslant \left| \grad (\s_1
      -\s_2) \right|^2 - \frac{\beta \beta'}{\delta^2} \left\| \grad
      \s_1 \right\|_{C(\bar \Omega)}^2 | \m_1 -\m_2 |^2 -
      \beta \beta' (1 + \delta^2) \left| \grad (\s_1
      -\s_2) \right|^2 \nonumber \\
      & \qquad \qquad \eqs (1 - \beta \beta' (1 + \delta^2))
      \left| \grad (\s_1 -\s_2) \right|^2 - \frac{\beta
      \beta'}{\delta^2} \left\| \grad \s_1 \right\|_{C(\bar \Omega)}^2 |
      \m_1 -\m_2 |^2\, .\nonumber
    \end{align}
  The first term on the right-hand side is positive for $\delta^2$
  sufficiently small. Also, the following estimate holds:
  \begin{align}
    & \left| \left\langle \divb [\tmmathbf{j}_e \otimes (\m_1
    -\m_2)], \s_1 -\s_2 \right\rangle_\Omega \right| \nonumber\\
    & \qquad\leqslant \; \int_{\Om} \left| \tmmathbf{j}_e \otimes (\m_1
    -\m_2) \Fsp \grad (\s_1 -\s_2) \right|
    + \int_{\partial \Om} | (\tmmathbf{j}_e \cdot
    \tmmathbf{n}) (\m_1 -\m_2) \cdot (\s_1
    -\s_2) | \nonumber \\
    & \qquad\leqslant \; \| \tmmathbf{j}_e \|_{C(\bar \Omega)} \| \m_1
    -\m_2 \|_{\Om} \left\| \grad (\s_1 -\s_2)
    \right\|_{\Om}  + \| \tmmathbf{j}_e \|_{C(\bar \Omega)} \| \m_1
    -\m_2 \|_{\partial \Om} \| \s_1 -\s_2
    \|_{\partial \Om} \nonumber \\
    & \qquad\leqslant \; \frac{\| \tmmathbf{j}_e \|_{C(\bar \Omega)}^2}{\delta^2} \|
    \m_1 -\m_2 \|_{H^1 ( \Om )}^2 +
    c^2_{\partial \Om} \delta^2 \| \s_1 -\s_2
    \|_{H^1 ( \Om )}^2 \nonumber
  \end{align}
  for some positive constant $c^2_{\partial \Om}$ arising from the continuity
  of the trace operator.
  
  Next, we observe that equation {\eqref{eq:difwf}} gives
\begin{align}
       &\int_{\Om} D_0 \left( \left| \grad (\s_1 -\s_2)
       \right|^2 - \beta \beta'  [(\nabla \s_1 \cdot \m_1)
       \otimes \m_1 - (\nabla \s_2 \cdot \m_2)
       \otimes \m_2] \Fsp \grad (\s_1
       -\s_2) \right) \nonumber\\
       &\quad \quad\quad \quad  + \gamma_1 \| \s_1
       -\s_2 \|_{\Om}^2+ \gamma_2 \int_{\Om} \s_1 \cdot [(\m_1
       -\m_2) \times (\s_1 -\s_2)] \nonumber\\
       & \qquad\qquad\qquad\qquad\qquad \qquad\qquad \qquad  \eqs -
       \frac{\beta}{2} \left\langle \divb [\tmmathbf{j}_e \otimes
       (\m_1 -\m_2)], \s_1 -\s_2
       \right\rangle .\nonumber
     \end{align} 
  Taking into account estimates {\eqref{eq:1est}} and
  \eqref{eq:2est} and recalling that $\inf_{\Om} D_0 \geqslant \gamma$,
 \begin{align}
       & \gamma \left( 1 - \beta \beta' (1 + \delta^2) \right)
       \left\| \grad (\s_1 -\s_2) \right\|_{\Om}^2 - \| D_0
       \|_{C(\bar\Omega)} \frac{\beta \beta'}{\delta^2} \left\| \grad \s_1
       \right\|_{C(\bar\Omega)}^2 \| \m_1 -\m_2 \|_{\Om}^2 +
       \gamma_1 \| \s_1 -\s_2 \|_{\Om}^2 \nonumber \\
       & \quad  \quad\quad  \quad\leqslant \frac{\beta}{2}
       \left[ \frac{\| \tmmathbf{j}_e \|_{C(\bar\Omega)}^2}{\delta^2} \| \m_1
       -\m_2 \|_{H^1 ( \Om )}^2 + c^2_{\partial \Om}
       \delta^2 \| \s_1 -\s_2 \|_{H^1 ( \Om
       )}^2 \right] \nonumber\\
       & \qquad  \qquad\qquad  \qquad\qquad  \qquad + | \gamma_2 | \left[ \frac{\| \s_1
       \|_{C(\bar\Omega)}^2}{2 \delta^2} \| \m_1 -\m_2 \|_{\Om}^2
       + \frac{\delta^2}{2} \| \s_1 -\s_2 \|_{\Om}^2
       \right] .
     \end{align} 
  Collecting the terms in the previous expression, we have
    \begin{align}
      & \left[ \gamma \left( 1 - \beta \beta' (1 + \delta^2)
      \right) - \delta^2  \frac{\beta}{2} c^2_{\partial \Om}  \right] \left\| \grad
      (\s_1 -\s_2) \right\|_{\Om}^2 + \left( \gamma_1 -
      \delta^2 \left( \frac{\beta}{2} c^2_{\partial \Om} + | \gamma_2
      | \right) \right) \| \s_1 -\s_2 \|_{\Om}^2 \nonumber \\
      &  \qquad  \qquad \leqslant
      \frac{1}{\delta^2} \left( \frac{| \gamma_2 | \| \s_1
      \|_{C(\bar\Omega)}^2}{2} + \| D_0 \|_{C(\bar\Omega)} \beta \beta' \left\| \grad
      \s_1 \right\|_{C(\bar\Omega)}^2 + \frac{\beta}{2} | \tmmathbf{j}_e
      |_{\infty}^2 \right) \| \m_1 -\m_2 \|_{H^1 (
      \Om)}^2 .
    \end{align} 
  Since $0 < \beta \beta' < 1$, there exists $\delta>0$ such that
  \begin{equation}
    \| \s_1 -\s_2 \|_{H^1 ( \Om )}^2 \leqslant
    c_L^2 \left( \| \s_1 \|_{C(\bar\Omega)}^2 + \left\| \grad \s_1
    \right\|_{C(\bar\Omega)}^2 + \| \tmmathbf{j}_e \|_{C(\bar\Omega)}^2 \right) \|
    \m_1 -\m_2 \|_{H^1 ( \Om )}^2
  \end{equation}
  with $c_L^2$ depending only on $D_0, \beta, \beta', \gamma_1, \gamma_2$, and $c_{\partial\Omega}$. This   concludes the proof.
\end{proof}

\subsection{Weak-strong uniqueness (energy
estimate)}\label{sec:wsgensett} Our proof of the weak-strong uniqueness of
solutions of LLG relies on Lemma \ref{lemma:forstrongweakuniq}
stated below. Our argument permits us to prove a more general form of the
weak-strong uniqueness result that we explain now. First, we deduce from the
self-adjointness of $\hd$ (cf.~\eqref{eq:hdself}) that
\begin{equation}
\frac{1}{2}\int_0^{\Ti}  \partial_t\left\langle
     \m, \hd [\m] \right\rangle_{\Om}=\int_0^{\Ti} \left\langle
     \partial_t \m, \hd [\m] \right\rangle_{\Om} \, .
\end{equation} 
Hence, integrating by parts (in time) the energy inequality \eqref{eq:eninequ0}, we infer that weak solutions of {\sllg} {\eqref{eq:CauchyequsValstationary}} satisfy, for every $\Ti > 0$, the
following form of the energy inequality:}
  \begin{align}
    \mathcal{E}{[\m]}( \Ti) & \assign \frac{c_{\tmop{ex}}}{2}
    \left\| \grad \m \left( \Ti \right) \right\|^2_{\Om} +
    \int_0^{\Ti} \alpha \| \partial_t \m \|^2_{\Om}  \notag\\
   &  \qquad  \qquad \leqslant \frac{c_{\tmop{ex}}}{2} \left\| \grad
    \m^{\ast} \right\|^2_{\Om} + \int_0^{\Ti} \left\langle
    \partial_t \m, \mu_0 \hd [\m] + j_0 \mathcal{H}_s
    [\m] \right\rangle_{\Om}  .  \label{eq:eninequalitynewforS}
  \end{align}
Second, due to Lemma \ref{lemma:regularitysm} and \eqref{eq:conthd},  we know that if $\m_1 \in C^2 ( \overline{\Om},
\Stwo^2 )$ and $\m_2 \in H^1 ( \Om, \Stwo^2 )$, then the nonlinear
operator $\tmmathbf{\pi} [\m] \assign \mu_0 \hd [\m] + j_0 \mathcal{H}_s
[\m]$ satisfies the Lipschitz-type condition
\begin{equation}
  \| \tmmathbf{\pi} [\m_1] -\tmmathbf{\pi} [\m_2]
  \|^2_{L^2 ( \Om )} \; \leqslant \; c_{\tmmathbf{\pi}}^2  \|
  \m_1 -\m_2 \|_{H^1 ( \Om )}^2 .
  \label{eq:estimatem1m2new}
\end{equation}
We stress the fact that here $c_{\tmmathbf{\pi}}$ may depend on the smooth vector field $\m_1$ (other than the physical parameters of the system) and,
therefore, condition {\eqref{eq:estimatem1m2new}} is weaker than the classical Lipschitz condition. 

Our proof of weak-strong uniqueness works in this more general setting. Therefore, for the rest of the paper, we will assume that our LLG equation has the more general form
\begin{equation}
    \partial_t \m \eqs -\m \times \Kc [\m] 
    \label{eq:CauchyequsValstationarynew}
\end{equation}
with $\Kc [\m] \assign c_{\tmop{ex}} \lapl \m+\tmmathbf{\pi}
[\m] - \alpha \partial_t \m$, and
$\tmmathbf{\pi}: H^1 ( \Om, \Stwo^2 ) \rightarrow L^2 ( \Om,
\RR^3 )$ a nonlinear operator satisfying the following
two properties:
\begin{enumerateroman}
  \item[\rm(a)] The operator $\tmmathbf{\pi}$ maps $C^{\infty} ( \overline{\Om},
  \Stwo^2 )$ into $C^{\infty} ( \overline{\Om}, \RR^3 )$.
  \smallskip
  \item[\rm(b)] For every $\m_1 \in C^{\infty} ( \overline{\Om},
  \Stwo^2 )$ and $\m_2 \in H^1 ( \Om, \Stwo^2 )$
  the Lipschitz-type condition {\eqref{eq:estimatem1m2new}} holds for some
  $c_{\tmmathbf{\pi}}$ which may depend on $\m_1$ but not on
  $\m_2$.
\end{enumerateroman}
It is important to stress that condition {\eqref{eq:estimatem1m2new}} is not
symmetric in $\m_1$ and $\m_2$ because of the special role
played by the \emph{smooth} vector field $\m_1$.

It has been already pointed out that the operator $\mu_0 \hd [\m] + j_0 \mathcal{H}_s$ satisfies (b). But it also satisfies (a): this is a consequence of Lemma \ref{lemma:regularitysm} for what concerns $ \mathcal{H}_s$, and of \cite[Lemma~8]{di2020weak} for what concerns $\hd$.
Other than for {\sllg}, the present framework covers in particular the case in which, as in {\eqref{eq:exprefffield}}, the effective
field comprises an external applied field $\tmmathbf{f}$ and the crystal
anisotropy contribution $\kappa \grad \phian (\m)$.

Generally speaking, the precise form of the LLG equation
depends on the type of interactions involved in the magnetic system. Such interactions are accounted for through the effective field $\heff$.  In its basic form, the effective field takes into account exchange interactions only, and further interactions can be described by suitable linear/nonlinear/local/non\-local terms into the effective field. In this respect, the current setting promotes a unified treatment of weak-strong uniqueness results for LLG.

We give the following definition.

\begin{definition}
  \label{def:weakllgpi} Let $\Om \subset \RR^3$ be a bounded domain, and let $\tmmathbf{\pi}: H^1 ( \Om, \Stwo^2 ) \rightarrow L^2 ( \Om,
\RR^3 )$ be a nonlinear operator satisfying the properties {\rm(a)} and {\rm(b)} above. Let $\m^{\ast} \in H^1 ( \Om, \Stwo^2 )$ be the initial
  value of the magnetization at time $t = 0$.  We say that $\m \in L^{\infty} ( \RR_+ ,
  H^1 ( \Om, \Stwo^2 ) )$ is a weak solution of the {\rm LLG}
  equation {\eqref{eq:CauchyequsValstationarynew}}, if  the following properties {\rm (i)--(iii)} hold for every $\Ti > 0$:
    \begin{enumerateroman}
    \item[\rm(i)] $\m \in H^1 ( \Om_{\Ti}, \Stwo^2
    )$ and $\m (0) =\m^{\ast}$ in the
    sense of traces,
    \item[\rm(ii)] for every  $\tmmathbf{\varphi} \in H^1 ( \Om_{\Ti})$,   
    \begin{align}
    \int_0^{\Ti} \langle \partial_t \m, \tmmathbf{\varphi}
    \rangle_{\Om} & \eqs  \int_0^{\Ti} \alpha \langle \partial_t
    \m, \tmmathbf{\varphi} \times \m \rangle_{\Om} 
		+ c_{\tmop{ex}}\left\langle \grad \m, \grad
    (\tmmathbf{\varphi} \times \m) \right\rangle_{\Om} \nonumber\\
    & \qquad\qquad -
    \int_0^{\Ti} \langle \tmmathbf{\pi} [\m], \tmmathbf{\varphi} \times
    \m \rangle_{\Om} \, ,
    \label{eq:defweaksolornew}
  \end{align}
  \item[\rm(iii)] the following energy inequality holds:
    \begin{eqnarray}
      \mathcal{E}{[\m]}( \Ti) \assign \frac{c_{\tmop{ex}}}{2}
      \left\| \grad \m \left( \Ti \right) \right\|^2_{\Om} +
      \int_0^{\Ti} \alpha \| \partial_t \m \|^2_{\Om}  \;
      \leqslant \; \frac{c_{\tmop{ex}}}{2} \left\| \grad \m^{\ast}
      \right\|^2_{\Om} + \int_0^{\Ti} \langle \partial_t \m,
      \tmmathbf{\pi} [\m] \rangle_{\Om}  \, .\label{eq:eninequalitynew}
    \end{eqnarray} 
    \end{enumerateroman}
\end{definition}
We prove Theorem
{\tmname{\ref{thm:Theorem2}}} in this more general setting. The main ingredient is contained in the next
result.

\begin{lemma}
  \label{lemma:forstrongweakuniq}Let $\m_2$ be a global weak
  solution of the {\tmem{LLG}} equation \tmem{(}in the sense of
  {\tmem{Definition~\ref{def:weakllgpi}}}\tmem{)}. Let $\m_1 \in
  C^{\infty} ( \overline{\Om}_{\Ti^{\ast}}, \Stwo^2 )$ be a strong
  solution of the {\tmem{LLG}} on $\overline{\Om_{\Ti^{\ast}}}$
	for some $\Ti^{\ast} >  0$. Set $\tmmathbf{v} \assign \m_2 -\m_1$. For
  {\tmabbr{a.e.}} $\Ti \in \RR$ such that $0 \leqslant \Ti < \Ti^{\ast}$, it holds that
  \begin{eqnarray}
    \mathcal{E}{[\tmmathbf{v}]}( \Ti ) & \leqslant & - c_{\tmop{ex}}
    \int_0^{\Ti} \left\langle \tmmathbf{v} \times \grad \mathcal{K}
    [\m_1], \grad \tmmathbf{v} \right\rangle_{\Om} 
    \nonumber\\
    &  & \qquad + \int_0^{\Ti} \left\langle \pmone - \alpha \partial_t \tmmathbf{v},
    \tmmathbf{v} \times \mathcal{K} [\m_1]\right\rangle_{\Om}  
		+ \int_0^{\Ti} \left\langle \partial_t
    \tmmathbf{v}, \pmone \right\rangle_{\Om} .  \label{eq:maineninequality}
  \end{eqnarray}
  Here, $\tmmathbf{v} \times \grad \mathcal{K} [\m_1] \assign
  (\tmmathbf{v} \times \partial_1 \mathcal{K} [\m_1], \tmmathbf{v}
  \times \partial_2 \mathcal{K} [\m_1], \tmmathbf{v} \times
  \partial_3 \mathcal{K} [\m_1])$, $\Kc [\m] \assign c_{\tmop{ex}}
  \lapl \m_{} +\tmmathbf{\pi} [\m] - \alpha \partial_t
  \m$, and $\pmone \assign \tmmathbf{\pi} [\m_1
  +\tmmathbf{v}] -\tmmathbf{\pi} [\m_1]$.
\end{lemma}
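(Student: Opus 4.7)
The strategy follows the weak--strong uniqueness argument of \cite{di2020weak}, relying on testing the weak formulation of $\m_2$ against a judicious smooth function built from the strong solution $\m_1$. The starting point is the algebraic identity obtained by expanding $\tmmathbf{v} = \m_2-\m_1$ in the definition of the energy:
\begin{equation*}
    \mathcal{E}[\tmmathbf{v}](\Ti) \;=\; \mathcal{E}[\m_2](\Ti) + \mathcal{E}[\m_1](\Ti) \;-\; c_{\tmop{ex}}\langle \grad \m_1(\Ti), \grad \m_2(\Ti)\rangle_{\Om} \;-\; 2\alpha \int_0^\Ti \langle \partial_t \m_1, \partial_t \m_2\rangle_{\Om}.
\end{equation*}
To the first two summands I would apply the energy inequality \eqref{eq:eninequalitynew} for the weak solution $\m_2$ and the energy \emph{equality} for the strong solution $\m_1$ (which follows by testing \eqref{eq:CauchyequsValstationarynew} with $\partial_t\m_1$ and using $\partial_t\m_1\cdot\m_1=0$, $\partial_t\m_1\cdot\Kc[\m_1]=0$). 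The initial contribution $c_{\tmop{ex}}\|\grad\m^*\|^2$ arising in both bounds is then absorbed by rewriting
$c_{\tmop{ex}}\langle \grad\m_1(\Ti),\grad\m_2(\Ti)\rangle_{\Om} = c_{\tmop{ex}}\|\grad\m^*\|^2_\Om + c_{\tmop{ex}}\int_0^\Ti\!\big(\langle \grad\partial_t\m_1,\grad\m_2\rangle_{\Om} + \langle \grad\m_1,\grad\partial_t\m_2\rangle_{\Om}\big)$, which is legitimate because $\m_2\in H^1(\Om_\Ti,\Stwo^2)$ and $\m_1$ is smooth.

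The crucial step is to recast these two cross integrals in terms that display $\tmmathbf{v}\times\Kc[\m_1]$. The first one is rewritten by spatial integration by parts using the Neumann condition $\partial_{\tmmathbf{n}}\partial_t\m_1 = 0$, and then the equation for $\m_1$, namely $-c_{\tmop{ex}}\Delta\m_1 = \tmmathbf{\pi}[\m_1] - \alpha\partial_t\m_1 - \Kc[\m_1]$, is substituted in. The second one, which involves the derivative $\partial_t\m_2$ of the weak solution, is handled by testing \eqref{eq:defweaksolornew} with $\tmmathbf{\varphi} = -\Kc[\m_1]$ (a smooth, admissible test function thanks to assumption (a)). The pivotal algebraic identity is
\begin{equation*}
    (-\Kc[\m_1])\times \m_2 \;=\; (-\Kc[\m_1])\times\m_1 + (-\Kc[\m_1])\times \tmmathbf{v} \;=\; -\partial_t\m_1 + \tmmathbf{v}\times\Kc[\m_1],
\end{equation*}
in which I used $\m_1\times\Kc[\m_1] = -\partial_t\m_1$. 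Substituting this into the weak formulation produces exactly the three structural building blocks of the target: an $\alpha$-term generating $\alpha\langle\partial_t\m_2,\tmmathbf{v}\times\Kc[\m_1]\rangle$ (which will eventually pair with $-\alpha\partial_t\tmmathbf{v}$), a $\tmmathbf{\pi}$-term of the form $\langle\tmmathbf{\pi}[\m_2],\tmmathbf{v}\times\Kc[\m_1]\rangle$, and the exchange contribution $c_{\tmop{ex}}\langle\grad\m_2,\grad(\tmmathbf{v}\times\Kc[\m_1])\rangle$. Expanding the latter by the Leibniz rule,
$\grad(\tmmathbf{v}\times\Kc[\m_1]) = (\grad\tmmathbf{v})\times\Kc[\m_1] + \tmmathbf{v}\times\grad\Kc[\m_1]$,
and writing $\grad\m_2 = \grad\m_1 + \grad\tmmathbf{v}$, the piece $c_{\tmop{ex}}\langle\grad\tmmathbf{v},(\grad\tmmathbf{v})\times\Kc[\m_1]\rangle$ vanishes pointwise (from $(a\times b)\cdot a=0$), while $c_{\tmop{ex}}\langle\grad\tmmathbf{v},\tmmathbf{v}\times\grad\Kc[\m_1]\rangle$ reassembles (after a cyclic rearrangement of the triple product) into the first term on the right-hand side of \eqref{eq:maineninequality}. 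Finally, splitting $\tmmathbf{\pi}[\m_2] = \tmmathbf{\pi}[\m_1] + \pmone$ throughout, the residual $\tmmathbf{\pi}[\m_1]$ contributions cancel with those coming from the energy equality of $\m_1$ and from the substitution of $-c_{\tmop{ex}}\Delta\m_1$, leaving precisely $\int\langle\partial_t\tmmathbf{v},\pmone\rangle$ and $\int\langle\pmone,\tmmathbf{v}\times\Kc[\m_1]\rangle$.

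\textbf{Main obstacle.} The bulk of the work lies not in any deep estimate but in the algebraic bookkeeping needed to keep track of the many cross terms and to match them against the three summands of \eqref{eq:maineninequality}; the cyclic property of the triple product and the pointwise constraints $|\m_1|=|\m_2|=1$ are used repeatedly, and the "good" test function $-\Kc[\m_1]$ must be precisely this one for the exchange gradient piece to collapse correctly. A secondary technical point is to justify rigorously the time differentiation of $\langle\grad\m_1,\grad\m_2\rangle_\Om$ and the use of $-\Kc[\m_1]$ as a test function in \eqref{eq:defweaksolornew} — both hinge on the smoothness of $\m_1$ on $\overline{\Om_{\Ti^*}}$ combined with $\m_2\in H^1(\Om_\Ti,\Stwo^2)$, so no additional regularity on $\partial_t\m_2$ beyond $L^2$ is required.
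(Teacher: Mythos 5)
Your proposal is correct in substance and shares the paper's overall architecture (expand $\mathcal{E}[\tmmathbf{v}]$, invoke the energy inequality, combine the strong equation for $\m_1$ with the weak formulation of $\m_2$ tested against $\pm\Kc[\m_1]$, and let the triple-product identities collapse the cross terms), but it differs from the paper in one genuinely interesting way: how the low regularity of $\m_2$ is handled. The paper follows \cite{di2020weak} and introduces smooth approximations $\m_{\varepsilon}\to\m_2$ in $H^1(\Om_{\Ti})$, integrates by parts so that the Laplacian lands on $\m_{\varepsilon}$, and must then show that the two resulting boundary terms $b_{1,\varepsilon}$ and $b_{2,\varepsilon}$ — each involving $\partial_{\tmmathbf{n}}\m_{\varepsilon}$, which does not converge for $\m_2\in H^1$ — cancel exactly against the boundary term produced by the final integration by parts of $\langle\tmmathbf{v}_\varepsilon\times\Kc[\m_1],c_{\tmop{ex}}\lapl\tmmathbf{v}_\varepsilon\rangle$. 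Your route avoids this entirely: by substituting $-\Kc[\m_1]\times\m_2=-\partial_t\m_1+\tmmathbf{v}\times\Kc[\m_1]$ \emph{before} integrating by parts and expanding $\grad(\tmmathbf{v}\times\Kc[\m_1])$ by Leibniz, every integration by parts moves derivatives onto the smooth $\m_1$ (with $\partial_{\tmmathbf{n}}\m_1=0$), so no approximation of $\m_2$ and no delicate boundary cancellation is needed. I checked that the algebra closes: after the cancellations you list, the residual terms are $\alpha\langle\m_1\times\Kc[\m_1],\tmmathbf{v}\times\Kc[\m_1]\rangle_{\Om}+\langle\tmmathbf{\pi}[\m_1],\tmmathbf{v}\times\Kc[\m_1]\rangle_{\Om}-c_{\tmop{ex}}\langle\grad\m_1,\grad(\tmmathbf{v}\times\Kc[\m_1])\rangle_{\Om}$, and one spatial integration by parts on the last summand reassembles them into $\langle\Kc[\m_1],\tmmathbf{v}\times\Kc[\m_1]\rangle_{\Om}=0$; you should display this step, since it is the one nontrivial cancellation your plan only asserts. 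Two minor imprecisions: (i) your description of the first cross integral is off — after integrating by parts you get $-\langle\lapl\partial_t\m_1,\m_2\rangle_{\Om}$, and substituting the equation there would require $\partial_t\lapl\m_1=\partial_t(\Kc[\m_1]-\tmmathbf{\pi}[\m_1]+\alpha\partial_t\m_1)/c_{\tmop{ex}}$, introducing unwanted time derivatives; in fact this term should simply be left in gradient form, where it cancels against the $-c_{\tmop{ex}}\langle\grad\m_2,\grad\partial_t\m_1\rangle_{\Om}$ piece produced by the exchange term of the tested weak formulation; (ii) the product rule for $t\mapsto\langle\grad\m_1(t),\grad\m_2(t)\rangle_{\Om}$ must be stated as $\langle\grad\partial_t\m_1,\grad\m_2\rangle_{\Om}-\langle\lapl\m_1,\partial_t\m_2\rangle_{\Om}$, since $\grad\partial_t\m_2$ does not exist for a weak solution.
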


\begin{proof}
  Note that $\m_2 = \m_1
  +\tmmathbf{v}$. Since $\tmmathbf{\pi}$ is nonlinear, it is convenient
  to write
  \begin{equation}
    \tmmathbf{\pi} [\m_1 +\tmmathbf{v}] =\tmmathbf{\pi}
    [\m_1] + \pmone \, .
  \end{equation}
By the energy inequality \eqref{eq:eninequalitynew} as well as the
  boundary condition $\partial_{\tmmathbf{n}} \m_1 = 0$ on
  $\partial \Om$, it holds for every $\Ti > 0$ that
  \begin{align}
    \mathcal{E}{[\tmmathbf{v}]} ( \Ti ) & \eqs 
    \mathcal{E}{[\m_1]} ( \Ti )
    +\mathcal{E}{[\m_2]}( \Ti ) - c_{\tmop{ex}} \left\langle
    \grad \m_1 \left( \Ti \right), \grad \m_2 \left( \Ti
    \right) \right\rangle_{\Om} - 2\int_0^{\Ti} \alpha\langle \partial_t \m_1,
    \partial_t \m_2 \rangle_{\Om}  \nonumber \\
    & \overset{\eqref{eq:eninequalitynew}}{\leqslant}  -
    c_{\tmop{ex}} \left\langle \grad \m_1 \left( \Ti \right), \grad
    \m_2 \left( \Ti \right) \right\rangle_{\Om} + c_{\tmop{ex}} \left\| \grad
    \m_{\ast} \right\|^2_{\Om} + \int_0^{\Ti} \langle \partial_t
    \m_1, \tmmathbf{\pi} [\m_1] \rangle_{\Om} \nonumber\\
    &  \qquad \qquad + \langle
    \partial_t \m_2, \tmmathbf{\pi} [\m_2] \rangle_{\Om}
     - 2\int_0^{\Ti}\alpha \langle \partial_t \m_1,
    \partial_t \m_2 \rangle_{\Om} .  \label{eq:wsu1}
  \end{align}
  Let $0 < \Ti < \Ti^{\ast}$. Since
  $\m_2 \in H^1 ( \Om_{\Ti}, \Stwo^2 )$, there exists a
  family of vector-valued functions $\m_{\varepsilon} \in C^{\infty}
  ( \overline{\Om}_{\Ti}, \RR^3 )$ such that
  $\m_{\varepsilon} \rightarrow \m_2$ strongly in $H^1
  ( \Om_{\Ti}, \RR^3 )$ as $\varepsilon\to 0$. Since both $\m_1$ and
  $\m_{\varepsilon}$ are smooth, integration by parts yields that
  \begin{align}
    c_{\tmop{ex}} \left\langle \lapl \partial_t \m_1,
    \m_{\varepsilon} \right\rangle_{\Om} & \eqs  - c_{\tmop{ex}}
    \left\langle \grad \partial_t \m_1, \grad
    \m_{\varepsilon} \right\rangle_{\Om} \nonumber\\
    & \eqs  b_{1,
    \varepsilon} + c_{\tmop{ex}} \left\langle \partial_t \m_1 , \lapl
    \m_{\varepsilon}\right\rangle_\Omega\, \label{eq:byDtemp}.
  \end{align}
  where $b_{1,\varepsilon} \assign -  c_{\tmop{ex}}  \langle \partial_t \m_1, 
	\partial_{\tmmathbf{n}}\m_{\varepsilon} \rangle_{\partial \Om}$.
  Hence, the first two terms on the right-hand side of {\eqref{eq:wsu1}} read
  as
  \begin{align}
    c_{\tmop{ex}} \int_0^{\Ti} \partial_t \left\langle \lapl \m_1,
    \m_2 \right\rangle_{\Om}  & \eqs  - c_{\tmop{ex}} \left\langle
    \grad \m_1 \left( \Ti \right), \grad \m_2 \left( \Ti
    \right) \right\rangle_{\Om} + c_{\tmop{ex}} \left\| \grad \m_{\ast}
    \right\|^2_{\Om} \nonumber\\
    & \eqs  \int_0^{\Ti} \left\langle \partial_t \m_2, c_{\tmop{ex}} \lapl
    \m_1 \right\rangle_{\Om}  + c_{\tmop{ex}} \lim_{\varepsilon
    \rightarrow 0} \int_0^{\Ti} \left\langle \lapl \partial_t \m_1,
    \m_{\varepsilon} \right\rangle_{\Om}  \nonumber\\
    & \overset{\eqref{eq:byDtemp}}{\eqs}  \int_0^{\Ti} \left\langle \partial_t \m_2, c_{\tmop{ex}} \lapl
    \m_1 \right\rangle_{\Om}  + \lim_{\varepsilon
    \rightarrow 0} \int_0^{\Ti} \left(\left\langle \partial_t \m_1, c_{\tmop{ex}}
    \lapl \m_{\varepsilon} \right\rangle_{\Om} + b_{1,
    \varepsilon}\right) . \nonumber
  \end{align}
  Thus, we can rearrange {\eqref{eq:wsu1}} to obtain
  \begin{align}
    \mathcal{E}{[\tmmathbf{v}]}( \Ti ) 
    & \overset{\eqref{eq:wsu1}}{\leqslant}
      \int_0^{\Ti} \left\langle \partial_t
    \m_2, c_{\tmop{ex}} \lapl \m_1 \right\rangle_{\Om}  +
    \lim_{\varepsilon \rightarrow 0} \int_0^{\Ti} \left(\left\langle \partial_t
    \m_1, c_{\tmop{ex}} \lapl \m_{\varepsilon} \right\rangle_{\Om}
    + b_{1, \varepsilon}\right)  \nonumber\\
    & \qquad + \int_0^{\Ti} \langle \partial_t
    \m_1, \tmmathbf{\pi} [\m_1] \rangle_{\Om} + \langle
    \partial_t \m_2, \tmmathbf{\pi} [\m_2] \rangle_{\Om}
      - 2\int_0^{\Ti} \alpha\langle \partial_t
    \m_1, \alpha \partial_t \m_2 \rangle_{\Om} .   \label{eq:wsu2}
  \end{align}
  Collecting the terms in the previous expression, we arrive at
  \begin{align}
    \mathcal{E}{[\tmmathbf{v}]} ( \Ti)  & \overset{\eqref{eq:wsu2}}{\leqslant}
     \lim_{\varepsilon \rightarrow 0}
    \int_0^{\Ti} \left(\left\langle \partial_t \m_1, c_{\tmop{ex}} \lapl
    \m_{\varepsilon} +\tmmathbf{\pi} [\m_1] - \alpha
    \partial_t \m_2 \right\rangle_{\Om} + b_{1, \varepsilon}\right)
     \nonumber\\
    &   \hspace{2em} + \int_0^{\Ti} \left\langle \partial_t
    \m_2, c_{\tmop{ex}} \lapl \m_1 +\tmmathbf{\pi} [\m_2] - \alpha \partial_t
    \m_1  \right\rangle_{\Om}.  \label{eq:wsu3}
  \end{align}
 Define $\tilde{\mathcal{K}} [\m_{\varepsilon}] \assign c_{\tmop{ex}} \lapl
  \m_{\varepsilon} +\tmmathbf{\pi} [\m_2] - \alpha
  \partial_t \m_2$, where the tilde script is used to emphasize that the Laplacian
  acts on $\m_{\varepsilon}$ and not on $\m_2$. 
	Since $\partial_t \m_1$ satisfies the strong form of LLG
   {\eqref{eq:CauchyequsValstationarynew}} with $\mathcal{K}
  [\m_1] = c_{\tmop{ex}} \lapl \m_{1} +\tmmathbf{\pi}
  [\m_1] - \alpha \partial_t \m_1$ and $\tmmathbf{\pi}
  [\m_1] =\tmmathbf{\pi} [\m_2] - \pmone$, the previous
  estimate leads to
  \begin{align}
    \mathcal{E}{[\tmmathbf{v}]}( \Ti ) & \overset{\eqref{eq:wsu3}}{\leqslant}
       \lim_{\varepsilon \rightarrow 0}
    \int_0^{\Ti} \left(\left\langle \partial_t \m_1, c_{\tmop{ex}} \lapl
    \m_{\varepsilon} +\tmmathbf{\pi} [\m_2] - \alpha
    \partial_t \m_2 - \pmone \right\rangle_{\Om} + b_{1,
    \varepsilon}\right)  \nonumber\\
    &  \qquad \qquad \qquad + \int_0^{\Ti} \left\langle \partial_t
    \m_2, \mathcal{K} [\m_1] + \pmone \right\rangle_{\Om}
     \nonumber \\
    & \eqs \lim_{\varepsilon \rightarrow 0} \int_0^{\Ti}\big( \left\langle
    \partial_t \m_1, \tilde{\mathcal{K}}
    [\m_{\varepsilon}] - \pmone \right\rangle_{\Om} + b_{1,
    \varepsilon}\big)  \nonumber\\
    &   \qquad \qquad \qquad + \int_0^{\Ti} \left\langle \partial_t
    \m_2, \mathcal{K} [\m_1] + \pmone \right\rangle_{\Om}
    , \nonumber \\
    & \eqs  \lim_{\varepsilon \rightarrow 0} \int_0^{\Ti} \big(\langle
    -\m_1 \times \mathcal{K} [\m_1], \tilde{\mathcal{K}}
    [\m_{\varepsilon}] \rangle_{\Om} + \langle \partial_t
    \m_2, \mathcal{K} [\m_1] \rangle_{\Om}  +
    b_{1, \varepsilon}\big)  \nonumber\\
    &  \qquad \qquad \qquad + \int_0^{\Ti} \left\langle \partial_t
    \tmmathbf{v}, \pmone \right\rangle_{\Om} ,  \label{eq:wsu04}
  \end{align}
   On the other hand, $\m_2$ is
  a weak solution of LLG {\eqref{eq:defweaksolornew}}. From the relation
  \begin{align}
    c_{\tmop{ex}} \left\langle \grad \m_{\varepsilon}, \grad
    (\tmmathbf{\varphi} \times \m_{\varepsilon}) \right\rangle_{\Om}
    & \eqs  - c_{\tmop{ex}} \left\langle \m_{\varepsilon} \times \lapl
    \m_{\varepsilon}, \tmmathbf{\varphi} \right\rangle_{\Om} +
    b_{2,\varepsilon} [
    \tmmathbf{\varphi}] ,   \label{eq:temp06}
  \end{align}
  where $b_{2,\varepsilon} [
    \tmmathbf{\varphi}] \assign c_{\tmop{ex}} \langle \m_{\varepsilon} \times
    \partial_{\tmmathbf{n}} \m_{\varepsilon}, \tmmathbf{\varphi}
    \rangle_{\partial \Om}$,
  we infer that
  \begin{align}
    \int_0^{\Ti} \langle \partial_t \m_2, \tmmathbf{\varphi}
    \rangle_{\Om}  & \overset{\eqref{eq:defweaksolornew}}{\eqs}
      \lim_{\varepsilon \rightarrow 0}
    \int_0^{\Ti} \Bigl(\langle \alpha \partial_t \m_2, \tmmathbf{\varphi}
    \times \m_{\varepsilon} \rangle_{\Om} - \langle \tmmathbf{\pi}
    [\m_2], \tmmathbf{\varphi} \times \m_{\varepsilon}
    \rangle_{\Om} \nonumber\\
    &   \qquad\qquad\qquad\qquad\qquad\qquad \qquad \qquad +c_{\tmop{ex}} \left\langle \grad \m_{\varepsilon}, \grad
    (\tmmathbf{\varphi} \times \m_{\varepsilon}) \right\rangle_{\Om}
    \Bigr)  \notag\\
    & \eqs  \lim_{\varepsilon \rightarrow 0} \int_0^{\Ti} \Bigl(\left\langle
    \m_{\varepsilon} \times \left( \alpha \partial_t \m_2
    - c_{\tmop{ex}} \lapl \m_{\varepsilon} -\tmmathbf{\pi} [\m_2]
    \right), \tmmathbf{\varphi} \right\rangle_{\Om} +b_{2,\varepsilon}
    [\tmmathbf{\varphi}]\Bigr)  \notag \\
    & \eqs  \lim_{\varepsilon \rightarrow 0} \int_0^{\Ti} \Bigl(\langle
    -\m_{\varepsilon} \times \tilde{\mathcal{K}}
    [\m_{\varepsilon}], \tmmathbf{\varphi} \rangle_{\Om} + b_{2,\varepsilon}
    [\tmmathbf{\varphi}] \Bigr). 
    \label{eq:temp07}
  \end{align}
  To further estimate the energy $\mathcal{E}{[\tmmathbf{v}]}(\Ti)$, we use 
	$\tmmathbf{\varphi} = \mathcal{K} [\m_1]$
  in {\eqref{eq:temp07}} and plug the result in {\eqref{eq:wsu04}}. To shorten 
	the notation, we collected the two boundary terms into the term
  \begin{align}
    b_{\varepsilon} & \assign   b_{1, \varepsilon} + b_{2,\varepsilon}[ \mathcal{K} [\m_1]] \nonumber\\
    & \eqs  - c_{\tmop{ex}}  \langle \partial_t \m_1,
    \partial_{\tmmathbf{n}} \m_{\varepsilon} \rangle_{\partial \Om}
    + c_{\tmop{ex}} \langle \m_{\varepsilon} \times \partial_{\tmmathbf{n}}
    \m_{\varepsilon}, \mathcal{K} [\m_1] \rangle_{\partial \Om} . 
    \label{eq:exprbeps}
  \end{align} 
  With $\tmmathbf{v}_{\varepsilon} \assign
  \m_{\varepsilon} -\m_1$ (and observing that
  $\tmmathbf{v}_{\varepsilon} \rightarrow \tmmathbf{v}$ strongly in $H^1
  ( \Om_T, \RR^3 )$), we find that
  \begin{align}
    \mathcal{E}{[\tmmathbf{v}]} ( \Ti ) & \overset{\eqref{eq:wsu04}}{\leqslant} \lim_{\varepsilon \rightarrow 0}
    \int_0^{\Ti} \Bigl( \langle -\m_1 \times \mathcal{K} [\m_1],
    \tilde{\mathcal{K}} [\m_{\varepsilon}] \rangle_{\Om} + \langle
    -\m_{\varepsilon} \times \tilde{\mathcal{K}}
    [\m_{\varepsilon}], \mathcal{K} [\m_1] \rangle_{\Om}
     \nonumber\\
    &   \qquad \qquad \qquad \qquad \qquad\qquad \qquad \qquad \qquad \qquad \qquad+  \left\langle \partial_t
    \tmmathbf{v}, \pmone \right\rangle_{\Om} + b_{\varepsilon} \Bigr) 
    \nonumber\\
    & \eqs  \lim_{\varepsilon \rightarrow 0} \int_0^{\Ti} \Bigl( \langle
    -\m_1 \times \mathcal{K} [\m_1], \tilde{\mathcal{K}}
    [\m_{\varepsilon}] \rangle_{\Om} + \langle
    \m_{\varepsilon} \times \mathcal{K} [\m_1],
    \tilde{\mathcal{K}} [\m_{\varepsilon}] \rangle_{\Om} 
    \nonumber\\
    &   \qquad \qquad \qquad \qquad \qquad\qquad \qquad \qquad \qquad \qquad \qquad + \left\langle \partial_t
    \tmmathbf{v}, \pmone \right\rangle_{\Om} + b_{\varepsilon} \Bigr) 
    \nonumber\\
    & \eqs  \lim_{\varepsilon \rightarrow 0} \int_0^{\Ti} \left(\langle
    \tmmathbf{v}_{\varepsilon} \times \mathcal{K} [\m_1],
    \tilde{\mathcal{K}} [\m_{\varepsilon}] \rangle_{\Om} +\left\langle \partial_t \tmmathbf{v}, \pmone
    \right\rangle_{\Om} + b_{\varepsilon}\right)  \nonumber\\
    & \eqs  \lim_{\varepsilon \rightarrow 0} \int_0^{\Ti} \left(\langle
    \tmmathbf{v}_{\varepsilon} \times \mathcal{K} [\m_1],
    \tilde{\mathcal{K}} [\m_{\varepsilon}] -\mathcal{K}
    [\m_1] \rangle_{\Om}  +\left\langle \partial_t
    \tmmathbf{v}, \pmone \right\rangle_{\Om} + b_{\varepsilon}\right) . 
    \label{eq:temp08}
  \end{align}
  We now expand the quantity $\tilde{\mathcal{K}} [\m_{\varepsilon}]
  -\mathcal{K} [\m_1]$. Since $\pmone =\tmmathbf{\pi}
  [\m_2] -\tmmathbf{\pi} [\m_1]$, we have
  \begin{align}
    \tilde{\mathcal{K}} [\m_{\varepsilon}] -\mathcal{K}
    [\m_1] & \eqs  c_{\tmop{ex}} \lapl \m_{\varepsilon}
    +\tmmathbf{\pi} [\m_2] - \alpha \partial_t \m_2 - c_{\tmop{ex}}
    \lapl \m_1 -\tmmathbf{\pi} [\m_1] + \alpha \partial_t
    \m_1 \nonumber\\
    & \eqs  c_{\tmop{ex}} \lapl \tmmathbf{v}_{\varepsilon} - \alpha \partial_t
    \tmmathbf{v}+ \pmone . \nonumber
  \end{align}
  Summarizing, we have reached the following estimate:
  \begin{align}
    \mathcal{E}{[\tmmathbf{v}]} ( \Ti ) & \overset{\eqref{eq:temp08}}{\leqslant}  \lim_{\varepsilon \rightarrow
    0} \int_0^{\Ti} \left(\left\langle \tmmathbf{v}_{\varepsilon} \times \mathcal{K}
    [\m_1], c_{\tmop{ex}} \lapl \tmmathbf{v}_{\varepsilon} - \alpha
    \partial_t \tmmathbf{v}+ \pmone \right\rangle_{\Om} + b_{\varepsilon}\right)
     \nonumber\\
    &   \; \qquad\qquad\qquad+ \int_0^{\Ti} \left\langle \partial_t \tmmathbf{v}, \pmone
    \right\rangle_{\Om} \nonumber\\
    & \eqs  \lim_{\varepsilon \rightarrow 0} \int_0^{\Ti} \left(\left\langle
    \tmmathbf{v}_{\varepsilon} \times \mathcal{K} [\m_1], c_{\tmop{ex}} \lapl
    \tmmathbf{v}_{\varepsilon} \right\rangle_{\Om} + b_{\varepsilon}\right)
     \nonumber\\
    &  \;\qquad\qquad\qquad + \int_0^{\Ti} \left\langle \tmmathbf{v} \times \mathcal{K}
    [\m_1], - \alpha \partial_t \tmmathbf{v}+ \pmone
    \right\rangle_{\Om}  + \int_0^{\Ti} \left\langle \partial_t
    \tmmathbf{v}, \pmone \right\rangle_{\Om} .  \label{eq:temp09}
  \end{align}
	
  Now we have to take care of the boundary terms in
  $b_{\varepsilon}$ defined in \eqref{eq:exprbeps}. 
	An integration by parts gives
  \begin{align}
    \left\langle \tmmathbf{v}_{\varepsilon} \times \mathcal{K}
    [\m_1], c_{\tmop{ex}} \lapl \tmmathbf{v}_{\varepsilon}
    \right\rangle_{\Om} & \eqs & - c_{\tmop{ex}} \left\langle \grad
    (\tmmathbf{v}_{\varepsilon} \times \mathcal{K} [\m_1]
    \nobracket, \grad \tmmathbf{v}_{\varepsilon} \right\rangle_{\Om} + c_{\tmop{ex}}
    \int_{\partial \Om} \partial_{\tmmathbf{n}} \tmmathbf{v}_{\varepsilon}
    \cdot (\tmmathbf{v}_{\varepsilon} \times \mathcal{K} [\m_1]) . 
    \label{eq:betemp1}
  \end{align}
  Next, we expand the boundary term in the previous expression. Recalling that
  $\partial_{\tmmathbf{n}} \m_1 = 0$, we find that
  \begin{align}
    \int_{\partial \Om} \partial_{\tmmathbf{n}} \tmmathbf{v}_{\varepsilon}
    \cdot (\tmmathbf{v}_{\varepsilon} \times \mathcal{K} [\m_1]) &
    \eqs  \int_{\partial \Om} \partial_{\tmmathbf{n}}
    \m_{\varepsilon} \cdot ((\m_{\varepsilon}
    -\m_1) \times \mathcal{K} [\m_1]) \nonumber\\
    & \eqs  \int_{\partial \Om} \mathcal{K} [\m_1] \cdot
    (\partial_{\tmmathbf{n}} \m_{\varepsilon} \times
    \m_{\varepsilon}) - \int_{\partial \Om} \mathcal{K}
    [\m_1] \cdot (\partial_{\tmmathbf{n}} \m_{\varepsilon}
    \times \m_1) \nonumber\\
    & \eqs  \int_{\partial \Om} \mathcal{K} [\m_1] \cdot
    (\partial_{\tmmathbf{n}} \m_{\varepsilon} \times
    \m_{\varepsilon}) - \int_{\partial \Om} \partial_{\tmmathbf{n}}
    \m_{\varepsilon} \cdot (\m_1 \times \mathcal{K}
    [\m_1]) \nonumber\\
    & \overset{\eqref{eq:CauchyequsValstationarynew}}{\eqs} 
    - \int_{\partial \Om} (\m_{\varepsilon} \times
    \partial_{\tmmathbf{n}} \m_{\varepsilon}) \cdot \mathcal{K}
    [\m_1] + \int_{\partial \Om} \partial_{\tmmathbf{n}}
    \m_{\varepsilon} \cdot \partial_t \m_1 \nonumber\\
    & \overset{\eqref{eq:exprbeps}}{\eqs}  -
    b_{\varepsilon} / c_{\tmop{ex}} .  \label{eq:betemp2}
  \end{align}
  By {\eqref{eq:betemp1}} and {\eqref{eq:betemp2}}, we can rewrite
  {\eqref{eq:temp09}} into the form
  \begin{align}
    \mathcal{E}{[\tmmathbf{v}]}( \Ti ) \overset{\eqref{eq:temp09}}{ \leqslant} &  \lim_{\varepsilon \rightarrow
    0} \int_0^{\Ti} - c_{\tmop{ex}} \left\langle \grad (\tmmathbf{v}_{\varepsilon}
    \times \mathcal{K} [\m_1] \nobracket, \grad
    \tmmathbf{v}_{\varepsilon} \right\rangle_{\Om}  \nonumber\\
    &   \qquad + \int_0^{\Ti} \left\langle \tmmathbf{v} \times \mathcal{K}
    [\m_1], - \alpha \partial_t \tmmathbf{v}+ \pmone
    \right\rangle_{\Om}  + \int_0^{\Ti} \left\langle \partial_t
    \tmmathbf{v}, \pmone \right\rangle_{\Om} . 
  \end{align}
  Finally, we observe that, as $\varepsilon\to 0$,
  \begin{equation}
  \left\langle \grad (\tmmathbf{v}_{\varepsilon}
  \times \mathcal{K} [\m_1] \nobracket, \grad
  \tmmathbf{v}_{\varepsilon} \right\rangle_{\Om} \eqs \left\langle
  \tmmathbf{v}_{\varepsilon} \times \grad \mathcal{K} [\m_1], \grad
  \tmmathbf{v}_{\varepsilon} \right\rangle_{\Om}\to 
   \left\langle\tmmathbf{v} \times \grad \mathcal{K} [\m_1], \grad
  \tmmathbf{v} \right\rangle_{\Om} \,.
  \end{equation}
   This concludes the proof.
\end{proof}

\subsection{Proof of Theorem {\tmname{\ref{thm:Theorem2}}}} 

Thanks to the energy estimate stated in Lemma~\ref{lemma:forstrongweakuniq}, the
weak-strong uniqueness result follows from a classical argument based on the
Gronwall lemma and the Poincar{\'e} inequality. We start by the energy
inequality {\eqref{eq:maineninequality}}. In expanded form, it reads as
follows:
\begin{align}
  \frac{c_{\tmop{ex}}}{2} \left\| \grad \tmmathbf{v} \left( \Ti \right) \right\|^2_{\Om} +
  \int_0^{\Ti} \alpha \| \partial_t \tmmathbf{v} \|^2_{\Om}  &
  \leqslant  \int_0^{\Ti} \left\langle \partial_t \tmmathbf{v}, \pmone
  \right\rangle_{\Om} - c_{\tmop{ex}} \int_0^{\Ti} \left\langle \tmmathbf{v} \times
  \grad \mathcal{K} [\m_1], \grad \tmmathbf{v} \right\rangle_{\Om}
  \label{eq:finalestimatetemp1} \\
  &    \quad + \int_0^{\Ti} \left\langle
  \tmmathbf{v} \times \mathcal{K} [\m_1], \pmone \right\rangle_{\Om}
  - \int_0^{\Ti} \langle \tmmathbf{v}
  \times \mathcal{K} [\m_1], \alpha \partial_t \tmmathbf{v}
  \rangle_{\Om} .  \nonumber
\end{align}
We recall that by hypothesis
\begin{equation}
  \left\| \pmone \right\|_{\Om} \eqs \| \tmmathbf{\pi} [\m_1
  +\tmmathbf{v}] -\tmmathbf{\pi} [\m_1] \|_{\Om}
  \overset{\eqref{eq:estimatem1m2new}}{\leqslant}
  c_{\tmmathbf{\pi}} \| \tmmathbf{v} \|_{H^1 ( \Om )}\, .
\end{equation}
Setting $c_1 \assign \| \mathcal{K} [\m_1] \|_{C^1 (
\overline{\Om_T} )}$, we obtain, for every $0 < \delta < 1$, 
the following estimates:
\begin{align}
  \left| \left\langle \partial_t \tmmathbf{v}, \pmone \right\rangle_{\Om}
  \right| & \leqslant  \frac{\delta^2}{2} \| \partial_t \tmmathbf{v}
  \|_{\Om}^2 + \frac{c_{\tmmathbf{\pi}}^2}{2 \delta^2} \| \tmmathbf{v}
  \|_{\Om}^2 + \frac{c_{\tmmathbf{\pi}}^2}{2 \delta^2} \left\| \grad
  \tmmathbf{v} \right\|_{\Om}^2 \, , \\
  \left| \langle \tmmathbf{v} \times \mathcal{K} [\m_1], \alpha
  \partial_t \tmmathbf{v} \rangle_{\Om} \right| & \leqslant  \frac{\alpha
  \delta^2}{2} \| \partial_t \tmmathbf{v} \|_{\Om}^2 + \frac{c_1^2}{2 \alpha
  \delta^2} \| \tmmathbf{v} \|_{\Om}^2, \nonumber\\
  \left| \left\langle \tmmathbf{v} \times \mathcal{K} [\m_1], \pmone
  \right\rangle_{\Om} \right| & \leqslant c_1 c_{\tmmathbf{\pi}} 
	\|\tmmathbf{v} \|_{\Om}\|\tmmathbf{v} \|_{H^1(\Omega)} 
	\leqslant  c_1 c_{\tmmathbf{\pi}}\|\tmmathbf{v} \|_{\Om}^2
	+ \frac12 c_1 c_{\tmmathbf{\pi}}\|\nabla\tmmathbf{v} \|_{\Om}^2, \\
  \left| \left\langle \tmmathbf{v} \times \grad \mathcal{K} [\m_1],
  \grad \tmmathbf{v} \right\rangle_{\Om} \right| & \leqslant  
	\frac{c_1}{2} \| \tmmathbf{v} \|_{\Om}^2 
	+ \frac{c_1}{2} \left\| \grad\tmmathbf{v} \right\|_{\Om}^2 . 
\end{align}
We conclude from {\eqref{eq:finalestimatetemp1}} and the previous bounds that
\begin{align}
  0  & \leqslant  \frac{c_{\tmop{ex}}}{2} \left\| \grad \tmmathbf{v} \left( \Ti
  \right) \right\|^2_{\Om} + \alpha_{\delta} \int_0^{\Ti} \| \partial_t
  \tmmathbf{v} \|^2_{\Om}  \nonumber\\
  & \leqslant  c_{(1, \tmmathbf{\pi},
  \alpha, \delta)}^2 \int_0^{\Ti} \| \tmmathbf{v} \|_{\Om}^2  
	+ \left(\frac{c_1}{2} + \frac{c_{\tmmathbf{\pi}}^2}{2 \delta^2} 
	+ \frac12 c_1c_{\tmmathbf{\pi}}\right)
  \int_0^{\Ti} \left\| \grad \tmmathbf{v} \right\|_{\Om}^2 , 
\end{align}
with $c_{(1, \tmmathbf{\pi}, \alpha, \delta)}^2 \assign
\frac{c_{\tmmathbf{\pi}}^2}{2 \delta^2} + \frac{c_1^2}{2 \alpha \delta^2} +
\frac{c_1}{2} + c_1 c_{\tmmathbf{\pi}}$ and $\alpha^2_{\delta}
\assign \alpha \left( 1 - \frac{\delta^2}{2} \right) - \frac{\delta^2}{2}$. We
assume $\delta$ small enough so that $\alpha^2_{\delta} > 0$. 
We infer from the Poincar{\'e} inequality that
\begin{equation}
  \frac{c_{\tmop{ex}}}{2} \left\| \grad \tmmathbf{v} \left( \Ti \right) \right\|^2_{\Om} +
  \alpha^2_{\delta} \int_0^{\Ti} \| \partial_t \tmmathbf{v} \|^2_{\Om}   \leqslant  c_{(1, \tmmathbf{\pi}, \alpha,
  \delta)}^2  \Ti^2 \int_0^{\Ti} \| \partial_t \tmmathbf{v} \|^2_{\Om}  + \left(\frac{c_1}{2} + \frac{c_{\tmmathbf{\pi}}^2}{2 \delta^2} 
	+ \frac12 c_1c_{\tmmathbf{\pi}}\right) \int_0^{\Ti} \left\| \grad \tmmathbf{v} \right\|_{\Om}^2 .
  \label{eq:finestimate}
\end{equation}
Thus, for any $\Ti$ sufficiently small, say for any $\Ti < \Ti_{\ast} <
\Ti^{\ast}$ with $\Ti_{\ast}$ depending only on $c_{(1, \tmmathbf{\pi},
\alpha, \delta)}^2$, we have $\alpha_{\delta}^2 > c_{(1, \tmmathbf{\pi},
\alpha, \delta)}^2  \Ti^2$. Hence,
\begin{equation}
  \frac{c_{\tmop{ex}}}{2} \left\| \grad \tmmathbf{v} \left( \Ti \right) \right\|^2_{\Om}
  \leqslant \left(\frac{c_1}{2} + \frac{c_{\tmmathbf{\pi}}^2}{2 \delta^2} 
	+ \frac12 c_1c_{\tmmathbf{\pi}}\right) \int_0^{\Ti} \left\| \grad \tmmathbf{v} \right\|_{\Om}^2
  .
\end{equation}
According to the Gronwall lemma, we infer that $\left\| \grad \tmmathbf{v} \left( \Ti
\right) \right\|^2_{\Om} = 0$ for any $\Ti < \Ti_{\ast}$. But then, for every
$\Ti < \Ti_{\ast}$ relation {\eqref{eq:finestimate}} reduces to
\begin{equation}
  \left( \alpha_{\delta}^2 - c_{(1, \tmmathbf{\pi}, \alpha, \delta)}^2  \Ti^2
  \right) \int_0^{\Ti} \| \partial_t \tmmathbf{v} \|^2_{\Om}  \;
  \leqslant \; 0 .
\end{equation}
This implies that $\| \partial_t \tmmathbf{v} \|^2_{\Om} = 0$ in $\left( 0,
\Ti_{\ast} \right)$ and hence $\m_1 \equiv \m_2$ in
$\left( 0, \Ti_{\ast} \right)$. Since $\Ti_{\ast}$ depends essentially only on
$\Ti^{\ast}$ (more precisely, on the $C^3$ norm of the smooth function
$\m_1$, and the physical parameters of the problem), we can repeat the
argument {\tmem{finitely many times}} to conclude that $\m_1 \equiv
\m_2$ in $\left( 0, \Ti^{\ast} \right)$.

\section*{Acknowledgments}

G.~Di~F., A.~J., and D.~P. acknowledge support from the Austrian Science Fund
(FWF) through the special research program {\tmem{Taming complexity in partial
differential systems}} (Grant SFB F65). 

A.~J.\ was partially supported by the FWF, grants W1245, P30000, and P33010.

V.~S. acknowledges support by Leverhulme grant RPG-2018-438. This work was initiated when V.~S. enjoyed the hospitality of the Vienna Center for PDEs at  TU Wien.

G.~Di~F. and V.~S. would also like to thank the Max Planck Institute for Mathematics in the Sciences in Leipzig for support and hospitality.

All authors acknowledge support from ESI, the Erwin Schrödinger International
Institute for Mathematics and Physics in Wien, given in occasion of the Workshop
on New Trends in the Variational Modeling and Simulation of Liquid Crystals held at ESI, in
Wien, on December 2--6, 2019.
\bibliographystyle{siam}
\bibliography{master}

\end{document}